\newtheorem{theorem}{Theorem}[section]
\newtheorem{lemma}[theorem]{Lemma}
\newtheorem{fact}[theorem]{Fact}
\newtheorem{proposition}[theorem]{Proposition}
\newtheorem{corollary}[theorem]{Corollary}
\newtheorem{claim}[theorem]{Claim}
\newtheorem{subclaim}{Subclaim}[theorem]
\theoremstyle{definition}
\newtheorem{definition}[theorem]{Definition}
\newtheorem{notation}[theorem]{Notation}
\let \restr = \upharpoonright
\let \into = \longrightarrow
\let \tld = \tilde
\let \wdtld = \widetilde
\let \sub = \subseteq
\let \elsub = \preccurlyeq
\let \av = \arrowvert
\let \ov = \overline
\let \a = \alpha
\let \b = \beta
\let \g = \gamma
\let \d = \delta
\let \e = \epsilon
\let \l = \lambda
\let \k = \kappa
\let \m = \mu
\let \n = \nu
\let \p = \pi
\let \t = \theta
\let \D = \Delta
\let \s = \sigma
\let \x = \xi
\let \z= \zeta
\let \o = \omega
\let \S = \Sigma
\let \al = \aleph
\let \la = \langle
\let \ra = \rangle
\let \mtcl = \mathcal
\let \mtbb = \mathbb
\let \it = \item
\DeclareMathOperator{\dom}{dom}
\DeclareMathOperator{\range}{range}
\DeclareMathOperator{\cf}{cf}
\DeclareMathOperator{\rank}{rank}
\DeclareMathOperator{\supp}{supp}
\DeclareMathOperator{\Lim}{Lim}
\DeclareMathOperator{\V}{\mathbf V}
\DeclareMathOperator{\W}{\mathbf W}
\DeclareMathOperator{\ZFC}{ZFC}
\DeclareMathOperator{\MA}{\textsf{MA}}
\DeclareMathOperator{\TC}{\textsf{TC}}
\title{A generalization of Martin's Axiom}
\author[D. Asper\'o]{David Asper\'o}
\thanks{Asper\'{o} was partially supported by the Austrian Science Fund (FWF) Project M 1408-N25. Mota was supported by the FWF Project P22430. Both authors were also partially supported by Ministerio de
Educaci\'{o}n y Ciencia Project MTM2008--03389 (Spain) and by Generalitat de Catalunya Project 2009SGR--00187 (Cata\-lonia).}
\address{David Asper\'o, School of Mathematics, University of East Anglia, Norwich NR4 7TJ, UK}
\email{d.aspero@uea.ac.uk}
\author[M.A. Mota]{Miguel Angel Mota}
\address{Miguel Angel Mota,  Department of Mathematics,
University of Toronto,
Toronto, Ontario,
CANADA
M5S 2E4}
\email{motagaytan@gmail.com}
\begin{document}

\subjclass[2010]{03E50, 03E57, 03E35, 03E05, 03E17}

\date{}
\maketitle
\pagestyle{myheadings}
\markright{A generalization of Martin's Axiom}

\begin{abstract}
We define the $\al_{1.5}$--chain condition. The corresponding forcing axiom is a generalization of Martin's Axiom; in fact, $\MA^{1.5}_{<\kappa}$ implies $\MA_{<\k}$. Also,  $\MA^{1.5}_{<\k}$ implies
certain uniform failures of club--guessing on $\o_1$ that do not seem to have been considered in the literature before. We show, assuming $\textsf{CH}$ and given any regular cardinal $\k\geq\o_2$ such that $\m^{\al_0}< \kappa$ for all $\m < \k$ and such that $\diamondsuit(\{\a<\k\,:\,\cf(\a)\geq\o_1\})$ holds, that there is a proper $\al_2$--c.c.\ partial order of size $\k$ forcing $2^{\al_0}=\k$ together with $\MA^{1.5}_{<\k}$.  \end{abstract}

\section{A generalization of Martin's Axiom. And some of its applications.}

Martin's Axiom, often denoted by $\MA$, is the following very well--known and very classical forcing axiom: If $\mtbb P$ is a partial order (poset, for short) with the countable chain condition\footnote{A partial order has the countable chain condition (is c.c.c.) if and only if it has no uncountable antichains. More generally, given a cardinal $\k$, a partial order has the $\k$--chain condition (is $\k$--c.c.) if it has no antichains of size $\k$.} and $\mtcl D$ is a collection of size less than $2^{\al_0}$ consisting of dense subsets of $\mtbb P$, then there is a filter $G\sub\mtbb P$ such that $G\cap D\neq\emptyset$ for every $D\in\mtcl D$.

Martin's Axiom is obviously a weakening of the Continuum Hypothesis. Given a cardinal $\l$, $\MA_{\l}$ is obtained from considering, in the above formulation of $\MA$, collections $\mtcl D$ of size at most $\l$ rather than of size less than $2^{\al_0}$. Martin's Axiom becomes interesting when $2^{\al_0}>\al_1$.

$\MA_{\aleph_1}$ was the first forcing axiom ever considered (\cite{Martin-Solovay}). As observed by  D.\ Martin, the consistency of $\MA$ together with $2^{\al_0}>\al_1$ follows from generalizing the Solovay--Tennenbaum construction of a model of Suslin's Hypothesis by iterated forcing using finite supports (\cite{Solovay-Tennenbaum}).
Since then, a plethora of applications of $\MA$ (+ $2^{\al_0}> \al_1$) have been discovered in set theory, topology, measure theory, group theory, and other areas (\cite{Fremlin} is a classical reference).

In the present paper we generalize Martin's Axiom to the class of posets satisfying what we call here the $\al_{1.5}$--\emph{chain condition} ($\al_{1.5}$--c.c.). This terminology is intended to highlight the fact that every poset with the countable chain condition (i.e., with the $\al_1$--c.c.) is in our class and that every poset in our class has the $\al_2$--c.c.
Of course it follows from the first inclusion that for every cardinal $\l$, the forcing axiom $\MA^{1.5}_\l$ for the class of all $\al_{1.5}$--c.c.\ posets relative to collections of $\l$-many dense sets implies $\MA_\l$. Furthermore, as to the consistency of $\MA^{1.5}_\lambda$, there is no restriction on $\l$ other than $\l < 2^{\al_0}$. More precisely, the same construction shows that $\MA^{1.5}_{\al_1}$, $\MA^{1.5}_{\al_{727}}$, $\MA^{1.5}_{\al_{\o_2 + \o + 3}}$, and so on are all consistent.\footnote{The same is true for the Solovay--Tenennbaum construction, i.e., the same construction shows the consistency of Martin's Axiom together with $2^{\al_0}$ being $\al_2$, $\al_{728}$, $\al_{\o_2 + \o + 4}$, and so on.} This construction will take the form of a forcing iteration, in a broad sense of the expression, involving certain partial symmetric systems of countable submodels as side conditions (cf.\ our earlier work in \cite{ASP}).

Note that the collapse of $\o_1$ to $\o$ with finite conditions has size $\al_1$ and therefore has the  $\al_2$--chain condition. The forcing axiom for collections of $\al_1$--many dense subsets of this poset is obviously false. This shows of course that some restriction is necessary in order to obtain a consistent forcing axiom for posets with the $\al_2$--c.c., even relative to collections of $\al_1$--many dense sets. On the other hand, the definition of our class of posets will be wide enough to contain all c.c.c.\ posets and in fact to make the corresponding forcing axiom $\MA^{1.5}_\l$ (for $\l \geq \aleph_1$) strictly stronger than $\MA_\l$. In fact, we will show for example that $\MA^{1.5}_\l$ implies certain `uniform' failures of Club Guessing on $\o_1$ that do not seem to have been considered before in the literature, and which do not follow from $\MA_\l$. As a matter of fact we do not know how to show the consistency of these statements by any method other than ours.

Before going on we will fix a setting for our notation. For the most part we will follow set--theoretic standards as set forth for example in \cite{JECH} and in \cite{KUNEN}. In particular, if $\la\mtcl P_\a\,:\,\a\leq\k\ra$ is a forcing iteration and $\a\leq\k$, $\dot G_\a$ denotes the canonical $\mtcl P_\a$--name for the generic filter added by $\mtcl P_\a$, and $\leq_\a$ typically denotes the extension relation on $\mtcl P_\a$. We will also make use of less standard pieces of notation. For example, if $N$ is a set for which $N\cap \o_1$ is an ordinal, we will use $\d_N$ to denote this intersection and will call $\d_N$ the {\emph height of $N$}.
Other pieces of notation will be introduced when needed.

Let $\mtbb P$ be a poset and let $N$ be a submodel of $\langle H(\theta), \in \rangle$ such that $\mtbb P \in N$ and $\t \geq \av \TC(\mtbb P)\av^+$.\footnote{Where $\TC(x)$ denotes the transitive closure of $x$.} Recall that a $\mtbb P$--condition $p$ is $(N, \mtbb P)$--generic if for every extension $p'$ of $p$ and every dense subset $D$ of $\mtbb P$ belonging to $N$ (equivalently, every maximal antichain $D$ of $\mtbb P$ belonging to $N$) there is some condition in $D \cap N$ compatible with $p'$. Also, $\mtbb P$ is proper (\cite{Shelah}) if for every cardinal $\t \geq \av \TC(\mtbb P)\av^+$ it holds, for club--many countable $N\elsub H(\t)$, that for every $p\in N\cap\mtbb P$ there is a condition $q$ in $\mtbb P$ extending $p$ and such that $q$ is $(N, \mtbb P)$--generic. Every poset $\mtbb P$ with the countable chain condition is proper as every condition is $(N, \mtbb P)$--generic for every $N$ as above.

Now we may proceed to the definition of the $\al_{1.5}$--c.c.

\begin{definition}\label{aleph_{1.5}--c.c.}
Given a poset $\mtbb P$, we will say that \emph{$\mtbb P$ has the $\al_{1.5}$--chain condition} ($\mathbb P$ is \emph{$\al_{1.5}$--c.c.} for short) if and only if for every regular cardinal $\lambda\geq\ |\TC(\mtbb P)|^+$ there is a club $D\sub[H(\l)]^{\al_0}$ such that for every finite $\{N_i\,:\, i < n\}\sub D$ and every $p\in\mtbb P$, if $p\in N_j$ for some $j$ such that $\d_{N_j}=\min\{\d_{N_i}\,:\,i<n\}$, then there is some condition extending $p$ and $(N_i,\, \mtbb P)$--generic for all $i$.\footnote{Note that we are not assuming that $p \in \bigcap\{N_i\, :\, i < m\}$.}
\end{definition}

Since the intersection of two clubs is again a club, we may assume that all members of $D$ in the above definition are countable elementary substructures of $\langle H(\lambda), \in \rangle$. Also note that a poset $\mtbb P$ has the $\al_{1.5}$--chain condition if and only if it satisfies the above definition for some $\l \geq\ |\TC(\mtbb P)|^+$ (this is true because the projection of a club is again a club). Finally note that every c.c.c.\ poset is $\al_{1.5}$--c.c.\ and that every $\aleph_{1.5}$--c.c.\ poset is proper.

\begin{proposition}
If $\mtbb P$ has the $\al_{1.5}$--c.c., then $\mtbb P$ has the $\al_{2}$--c.c.
\end{proposition}

\begin{proof}
Let $A$ be a maximal antichain of $\mtbb P$ such that $|A| \geq \aleph_2$. Let $\l$ and $D$ be as in Definition \ref{aleph_{1.5}--c.c.} and let $N_p$ be, for every $p\in A$, a member of $D$ such that  $A$, $p\in N_p$. Let $\d<\o_1$ be such that the set $A'$ of $p\in A$ such that $\d_{N_p}=\d$ is uncountable. Pick some $p\in A'$ and some $p'\in A'\setminus N_p$. Then, since $\d_{N_{p'}}=\d_{N_p}=\d$ and $p'\in N_{p'}$, there is a condition $q$ extending $p'$ such that $q$ is $(N_p, \mtbb P)$--generic. In particular $q$ is compatible with some $\bar p\in A\cap N_p$. This is a contradiction since $A$ is an antichain and $\bar p\neq p'$.
\end{proof}

It is clear that the intersection of any c.c.c.\ poset $\mtcl Q $ with any $N\elsub H(\t)$ such that $\mtcl Q\in N$ (for any $\t>\av \TC(\mtcl Q)\av$) is itself c.c.c. In fact, for every cardinal $\k$ the $\k$--c.c.\ is hereditary, in the sense that if $\mtcl Q$ has the $\k$--c.c., $\t$ is a cardinal such that $\av \TC(\mtcl Q)\av<\t$, and $N\elsub H(\t)$ is such that $\mtcl Q\in N$, then $\mtcl Q \cap N$ is $\k$--c.c. The reason is simply that two condition in $\mtcl Q\cap N$ are compatible in $\mtcl Q$ if and only if they have a common extension in $\mtcl Q\cap N$.  This fact, for $\k=\al_1$, is a crucial ingredient in the Solovay--Tenenbaum consistency proof of $\MA$ with $2^{\al_0}= \l$, where one argues that if $\mtcl Q$ is a c.c.c.\ poset in the final extension $\W$ and $(A_i\mid i<\m)\in \W$ is a sequence of maximal antichains of $\mtcl Q$ for $\m<\l$, then there is a c.c.c.\ poset $\mtcl R\sub\mtcl Q$ such that each $A_i$ is a maximal antichain of $\mtcl R$ and such that moreover $\av\mtcl R\av<\l$.\footnote{One then argues that there is some stage of the iteration at which both $\mtcl R$ and $(A_i\mid i<\m)$ were defined and at which we forced with $\mtcl R$.}   It is not clear whether or not the $\al_{1.5}$--c.c.\ is hereditary in the above sense, but in any case we can prove the following weak form of this property, which will suffice for our consistency proof (Theorem \ref{mainthm}).

\begin{fact}\label{factproper2}
Suppose $\mtcl Q$ has the $\al_{1.5}$--c.c., $\t$ is a cardinal such that there is some strong limit cardinal $\chi$ with $\t>\chi>\av\TC(\mtcl Q)\av$,  and $M$ is an elementary substructure of $H(\t)$ such that $\mtcl Q\in M$ and $^{\o_1}M\sub M$. Then $\mtcl R = \mtcl Q\cap M$ is a complete suborder of $\mtcl Q$ with the $\al_{1.5}$--c.c.
\end{fact}

\begin{proof}
To start with, notice that $\mtcl R$ has the $\al_2$--chain condition and is a complete suborder of $\mtcl Q$ since $M$ is closed under $\o_1$--sequences.
To see that $\mtcl R$ has the $\al_{1.5}$--c.c., let $\chi\geq\av \TC(\mtcl Q)\av^+$ be a strong limit cardinal in $M$ and $W$ a bijection between $\chi$ and $H(\chi)$ also living in $M$. Let $D\sub[H(\chi)]^{\al_0}$ be a club witnessing the $\al_{1.5}$--c.c.\ of $\mtcl Q$, let  $p\in \mtcl R$, and let $N_0,\ldots, N_m$ be a finite set of countable elementary substructures of $\langle H(\chi), \in, W\rangle$ in $D$ containing $\mtcl Q$ and such that $p\in N_j$ for some $j$ with $\d_{N_j}$ minimal.

\begin{subclaim}\label{copias}
In $M$ there are countable elementary substructures $M_0, \ldots, M_m$ of  $(H(\chi), \in, W)$ such that for all $i$,

\begin{itemize}
\it[(a)] $M_i \in M$,

\it[(b)] $M_i\cap N_i=N_i\cap M$,\footnote{In particular, $\mtcl Q\in M_i$.}, and

\it[(c)] there is an isomorphism $\varphi_i: (N_i, \in, W)\into (M_i, \in, W)$ fixing $N_i\cap M_i$.

\end{itemize}
\end{subclaim}

\begin{proof}

By elementarity and since $M$ contains all the relevant parameters (all the real numbers, $M \cap N_i$ and $W$), $M$ also contains a set $\{M_0,\ldots, M_m\}$ such that for each $i$,  \begin{itemize}
\it $M \cap N_i\sub M_i$ and

\it there is
an isomorphism $\varphi_i$ between $(N_i, \in, W, M_i\cap N_i\cap\chi)$ and $(M_i, \in, W, M_i\cap M_i\cap \chi)$.
\end{itemize}

Note that $\varphi_i$ has to be the identity on $M_i \cap N_i \cap \chi$. Finally, since there is  a bijection $W: \chi \into H(\chi)$ definable in $(H(\chi), \in, W)$, we have that $\varphi_i$ fixes $M \cap N_i \cap \chi$ if and only if it fixes $M \cap N_i$.

\end{proof}

Fix $M_0, \ldots, M_m$ as in the above claim. Since $p\in M_j$ and $\d_{M_j}=\min\{\d_{M_0},\ldots, \d_{M_m}\}$, there is  a condition $\ov p\in\mtcl R$ such that $\ov p\leq_{\mtcl Q}p$ and such that $\ov p$ is $(M_i, \mtcl Q)$--generic for every $i$.  Suppose towards a contradiction that there is $p'\leq_{\mtcl R}\ov p$, $i_0< m+1$, and some maximal antichain $A$ of $\mtcl R$ in $N_{i_0}$ such that no condition in $A\cap N_{i_0}$ is compatible with $p'$.
Let $p^\ast \in\mtcl R$ be a common extension of $p'$ and of some $p''\in \varphi_i(A)\cap M_{i_0}$. To see that there are such $p^\ast$ and $p''$, note that $\varphi_{i_0}(A)\in M_i$ is a maximal antichain of $\mtcl Q$.  This is true since $\mtcl Q\in N_{i_0}\cap M_{i_0}$ and, therefore, $\varphi_{i_0}$ fixes $\mtcl Q$.

Now note that $A\in M$ since $M$ is closed under $\o_1$--sequences and $\av A\av\leq\al_1$. It follows that $\varphi_{i_0}(A)=A$ since $\varphi_{i_0}$ is the identity on $M\cap N_{i_0}$.  Also, $p''\in N_{i_0}$. To see this, take a surjection $f:\o_1\into A$ in $N_{i_0}\cap M$ (take for example the $W$--first surjection $f:\o_1\into A$). Then $\varphi_{i_0}(f) = f\in M_{i_0}$ is a surjection from $\o_1$ onto $\varphi_{i_0}(A)=A$. Let $\z\in M_{i_0} \cap\o_1$ such that $f(\z)=p''$. Then $\z\in N_{i_0}\cap\o_1$ and so $p''= f(\z)\in N_{i_0}$. This contradiction finishes the proof. \end{proof}

\begin{corollary}\label{cor5}
If $\mtcl Q$ has the $\al_{1.5}$--chain condition and $X$ is a subset of $\mtcl Q$, then  there is $\mtcl R$ such that

\begin{itemize}

\it[(i)] $\mtcl R$ is a complete suborder of $\mtcl Q$,

\it[(ii)] $\mtcl R$ has the $\al_{1.5}$--chain condition,

\it[(iii)] $X\sub\mtcl R$, and

\it[(iv)] $\av\mtcl R\av \leq
 \av X\av^{\al_1}$

\end{itemize}
\end{corollary}

We are ready now to define our generalisation of $\MA$.

\begin{definition}\label{def00}
Given a cardinal $\l$, let $\MA^{1.5}_{\l}$ be the following statement: For every $\al_{1.5}$--c.c.\ poset $\mtbb P$ and every collection $\mtcl D$ of size $\l$ consisting of dense subsets of $\mtbb P$ there is a filter $G\sub\mtbb P$ such that $G\cap D\neq\emptyset$ for all $D\in \mtcl D$.
\end{definition}

In the remainder of this section we present more consequences of $\MA^{1.5}_\l$.

\begin{proposition}\label{productiveness}
$\MA^{1.5}_{\al_2}$ implies that if $\mtbb P$ is $\al_{1.5}$--c.c.\ and $X\in[\mtbb P]^{\al_2}$, then there is $Y\in[X]^{\al_2}$ such that every nonempty $\s\in[Y]^{{<}\o}$ has a lower bound in $\mtbb P$. In particular, $\MA^{1.5}_{\al_2}$ implies that any finite support product of $\al_{1.5}$--c.c.\ posets has the  $\al_2$--c.c.
\end{proposition}

Note that there are $\textsf{ZFC}$ examples showing that the $\al_2$--c.c.\ is not productive (\cite{Shelah-colouring-non-productivity}).
The proof of Proposition \ref{productiveness} is essentially the same as the classical proof that the c.c.c.\ is productive under $\MA_{\aleph_1}$ (see for example \cite{KUNEN}, Lemma 2.23 and Theorem 2.24). For the reader's convenience we include the argument here.

Say that a topological space is $\al_{1.5}$--c.c.\ if its topology, ordered by inclusion, is an $\al_{1.5}$--c.c.\ partial order. Of course,  a partial order $\mtbb P$ is $\al_{1.5}$--c.c.\ if and only if the topology on $(\mtbb P, \leq)$ generated by the collection of cones $\{p\in\mtbb P\,:\, p\leq q\}$ (for $q\in\mtbb P$) is $\al_{1.5}$--c.c. Hence, in order to prove the first assertion of Proposition \ref{productiveness} it suffices to prove the following proposition.  The second assertion follows then immediately from the first by a standard $\D$--system argument.

\begin{proposition}\label{productiveness2}
$\MA^{1.5}_{\al_2}$ implies that if $X$ is an $\al_{1.5}$--c.c.\ topological space and $\{U_\a\,:\,\a<\o_2\}$ is a collection of nonempty open subsets of $X$, then there is $I\in[\o_2]^{\al_2}$ such that $\{U_\a\,:\,\a\in I\}$ has the finite intersection property.\end{proposition}

\begin{proof}
Let $(V_\a)_{\a<\o_2}$ be the decreasing sequence of open sets given by $V_\a=\bigcup_{\b>\a}U_\b$. Note that there must be $\a<\o_2$ such that $\bar V_\a=\bar V_\b$ for all $\b>\a$ (where $\bar V$ denotes topological closure). Otherwise there would be $Z\sub \o_2$ unbounded in $\o_2$ such that $V_\a\setminus \bar V_\b\neq\emptyset$ for all $\a<\b$ in $Z$, which would yield the existence of $\al_2$--many pairwise disjoint open subsets of $X$, a contradiction since $X$ is $\al_{1.5}$--c.c.

Let now $\a$ be as above, and let $\mtbb P$ be the partial order of all nonempty open subsets of $V_\a$ ordered by inclusion. For every $\b>\a$ let $D_\b$ be the set of $U\in \mtbb P$ such that $U\sub U_\g$ for some $\g>\b$. Since $\bar V_\a\sub \bar V_\b$, it follows that $D_\b$ is a dense subset of $\mtbb P$. Hence, since $\mtbb P$ is $\al_{1.5}$--c.c., by $\MA^{1.5}_{\al_2}$ we may find a filter $G\sub\mtbb P$ meeting $D_\b$ for all $\b>\a$, which gives the desired conclusion.
\end{proof}

Note that the above (weak) productiveness result, as well as the productiveness of c.c.c.\ under $\textsf{MA}_{\al_1}$, are instances of the following general result, which is proved exactly as above.

\begin{proposition} If $\Gamma$ is a class of posets with the $\k$--c.c., then the forcing axiom for $\Gamma$ and for collections of $\k$--many dense sets implies that for every $\mtbb P\in\Gamma$ and every $X\in[\mtbb P]^\k$ there is some $Y\in[X]^\k$ such that every nonempty $\s\in[Y]^{{<}\o}$ has a lower bound in $\mtbb P$; in particular, this forcing axiom implies that any finite support product of members of $\Gamma$ has the $\k$--c.c.
\end{proposition}

Next we will show that $\MA^{1.5}_\l$ implies certain uniform failures of Club Guessing on $\o_1$, as advertised in the abstract. It will be convenient to consider the following natural notion of rank (see for example \cite{ASP}).

\begin{definition}
Given a set $X$ and an ordinal $\d$, we define the \textit{Cantor--Bendixson rank of $\d$ with respect to $X$},
$\rank(X, \d)$, by specifying that
\begin{itemize}

\it $\rank(X, \d) \geq 1$ if and only if $\d$ is a limit point of ordinals in $X$, and that

\it if $\m >1$, $\rank(X, \d) \geq \m$ if and only and for every $\eta < \mu$, $\d$ is a limit of ordinals $\e$ with $\rank(X, \e)\geq\eta$.

\end{itemize}

\end{definition}

Given an ordinal $\tau$, we will say that a set $X$ of ordinals is \emph{$\tau$--thin} in case $\rank(X, \d)\leq\tau$ for all ordinals $\d$.

\begin{definition} Given ordinals $\tau$ and $\l$, $\tau<\omega_1$, $(\cdot)^\tau_\lambda$ is the following statement: For every sequence $(A_i)_{i<\l}$, if $A_i$ is a $\tau$--thin subset of $\omega_1$  for all $i<\lambda$, then there is a club $C\sub\omega_1$ such that $\av C\cap X_i\av < \aleph_0$ for all $i$.
\end{definition}

Clearly, if $\tau\leq\tau'$ and $\lambda\leq\lambda'$, then $(\cdot)^{\tau'}_{\lambda'}$ implies $(\cdot)^\tau_\lambda$. In particular, for every infinite $\tau$ and every $\lambda\geq \omega_1$, $(\cdot)^\tau_\lambda$ implies $\lnot\textsf{WCG}$, where $\textsf{WCG}$ denotes Weak Club Guessing on $\omega_1$ (cf.\ \cite{Shelah}).

\begin{fact} For every cardinal $\lambda\geq\omega_1$, the following weakening of $(\cdot)^\omega_\l$ implies $2^{\aleph_0}>\l$: For every sequence $(A^i_\delta)_{i<\l,\,\delta\in \Lim(\omega_1)}$, if each $A^i_\delta$ is a cofinal subset of $\delta$ of order type $\omega$, then there is a club $C\sub\omega_1$ such that $A^i_\delta \nsubseteq C$ for all $i<\lambda$ and $\delta\in \Lim(\omega_1)$.\end{fact}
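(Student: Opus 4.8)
The plan is to prove the contrapositive: if $2^{\al_0}\leq\l$, then the displayed weakening of $(\ast)^\o_\l$ fails, i.e.\ there is a sequence $(A^i_\d)_{i<\l',\,\d\in Lim(\o_1)}$ with $\l'\leq\l$, with each $A^i_\d$ a cofinal subset of $\d$ of order type $\o$, such that every club $C\sub\o_1$ contains some $A^i_\d$. This suffices, since if $2^{\al_0}>\l$ the conclusion of the Fact already holds.

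First I would set $\l'=2^{\al_0}$, which by assumption is $\leq\l$. For each $\d\in Lim(\o_1)$, the ordinal $\d$ is countable, so the family $\mtcl A_\d$ of all cofinal subsets of $\d$ of order type $\o$ has cardinality at most $\al_0^{\al_0}=2^{\al_0}$. Hence one can fix, for each such $\d$, a surjection $i\mapsto A^i_\d$ from $2^{\al_0}$ onto $\mtcl A_\d$ (with repetitions allowed). This defines the required sequence.

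Next I would verify that this sequence defeats every club. Let $C\sub\o_1$ be an arbitrary club. The set of limit points of $C$ below $\o_1$ is again club, so I may pick some $\d$ in it; such a $\d$ is a countable limit ordinal, so $\d\in Lim(\o_1)$, and $C\cap\d$ is cofinal in $\d$. Thinning out $C\cap\d$ gives a set $A\sub C\cap\d$ that is cofinal in $\d$ and of order type $\o$, i.e.\ $A\in\mtcl A_\d$. By the choice of the enumeration, $A=A^i_\d$ for some $i<2^{\al_0}=\l'$, and $A\sub C$; hence $C$ does not witness the statement for this sequence. Since $C$ was arbitrary, the weakening of $(\ast)^\o_\l$ is false, which is exactly what the contrapositive requires.

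I do not expect a genuine obstacle. The two points needing (routine) attention are the cardinality bound — the number of order-type-$\o$ cofinal subsets of a countable ordinal is at most $2^{\al_0}$, hence at most $\l$ under the working hypothesis — and the elementary fact that any club $C\sub\o_1$ has limit points $\d<\o_1$ lying in $C$, which is what lets the extracted $\o$-sequence $A$ be taken inside $C$ itself. The argument is in the spirit of the trivial observation that a family of $2^{\al_0}$ many $\o$-sequences always ``guesses'' every club of $\o_1$.
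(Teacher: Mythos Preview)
Your proof is correct and follows essentially the same approach as the paper: both prove the contrapositive by, for each $\d\in Lim(\o_1)$, enumerating all order-type-$\o$ cofinal subsets of $\d$ (which number at most $2^{\al_0}\leq\l$), and then observing that any club $C$ has a limit point $\d$ at which one of the enumerated ladders lies inside $C$. The only cosmetic difference is that the paper takes $\l'=\l$ while you take $\l'=2^{\al_0}$, which is immaterial.
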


\begin{proof} Suppose $2^{\al_0}\leq \l$ and let $(A^i_\delta)_{i < \l,\,\delta\in \Lim(\omega_1)}$ be such that for each $\delta$, $\{A^i_\delta\,:\,i<\lambda\}$ contains all cofinal subsets of $\delta$ of order type $\omega$. If $C\sub\omega_1$ is a club and $\delta\in C$ is a limit point of $C$, then there is $i<\lambda$ such that $A^i_\delta\sub C$.\end{proof}

Furthermore, it is not hard to see that full $(\cdot)^\omega_\lambda$ implies $\mathfrak b > \lambda$.

\begin{fact}\label{unifnoclub-guessing}
For every cardinal $\l\geq\o_1$, $\MA^{1.5}_\l$ implies $(\cdot)^\tau_\l$ for every $\tau<\omega_1$.\end{fact}

\begin{proof}
Let $(A_i)_{i<\l}$ be as in the definition of $(\cdot)^\tau_\l$. Let $\mtbb P$ consist of all pairs $(f, X)$ such that

\begin{itemize}

\it[(a)] $f \subseteq \o_1 \times \o_1$ is a finite function such that $\rank(f(\n), f(\n))\geq\n$ for every $\n\in \dom(f)$,

\it[(b)]  $X$ is finite set of triples $(i, \n, a)$ such that $i < \l$,  $\n\in \dom(f)$, $\rank(A_i, f(\n))<f(\n)$, and $a$ is a finite subset of  $f(\n)$, and

\it[(c)] for every $(i, \n, a)\in X$, $\range(f\restr  \n)\cap A_i=a$.
\end{itemize}

Given $\mtbb P$--conditions $(f_0, X_0)$ and $(f_1, X_1)$, $(f_1, X_1)$ extends $(f_0, X_0)$ if $f_0\sub f_1$ and $X_0 \sub X_1$.

Using the fact that there are only $\al_1$--many finite functions $f \subseteq \o_1 \times \o_1$, it is easy to check that $\mtbb P$ is $\al_{1.5}$--c.c.\ (for example by arguments as in Section 5 of \cite{ASP} for similar forcings).
Also, there is a collection $\mtcl D$ of $\max\{\l, \o_1\}$--many dense subsets of $\o_1$ such that if $G$ is a filter of $\mtbb P$ meeting all members of $\mtcl D$, then $\range(\bigcup\{f\,:\,(f, X)\in G\textrm{ for some }X\})$ is a club witnessing $(\cdot)^+_\l$ for $(A_i)_{i<\l}$.
\end{proof}

In contrast with Fact \ref{unifnoclub-guessing}, no forcing axiom $\MA_\l$  implies $(\cdot)^\tau_{\l}$ for any infinite $\tau<\o_1$ and any $\l \geq \o_1$. The reason is simply that $\MA_\l$ can always be forced by a c.c.c.\ forcing and c.c.c.\ forcing preserves $\textsf{WCG}$.

It seems that Theorem \ref{mainthm} below provides the first known construction of a model of $\MA_\l$, for any $\l>\o_1$, in which $\textsf{WCG}$ fails. Indeed, any long enough c.c.c.\ iteration $\mtcl P$ with finite supports producing a model of $\MA$ will necessarily force $\textsf{WCG}$. The reason is that $\mtcl P$ will add a Cohen real at some stage by its being a c.c.c.\ iteration with finite supports (and in fact at the $\o$--th stage). This Cohen real will add a weak club--guessing sequence $\vec C$ at that stage (for example by results in \cite{JUHASZ}) and, letting $\W$ be the corresponding model, $\vec C$ will remain weak club--guessing in the end since the tail forcing has the c.c.c.\ in $\W$ and therefore every club of $\o_1$ in the final model contains a club from $\W$.

\begin{definition} (\cite{ASP})
Given a partial order $\mtbb P$, \emph{$\mtbb P$ is finitely proper} if for every cardinal $\t\geq\av \mtbb P\av^+$, every finite set $\{N_i\,:\, i \in m\}$ of countable elementary substructures of $H(\t)$ containing $\mtbb P$, and every condition $p\in \bigcap\{N_i\, :\, i < m\} \cap\mtbb P$ there is some $q\in\mtbb P$ extending $p$ and $(N_i, \mtbb P)$--generic for all $i$.
\end{definition}

\begin{definition} (\cite{ASP}, essentially) Given a cardinal $\l$, $\textsf{PFA}^{\mbox{fin}}(\o_1)_\l$ is the forcing axiom for the class of finitely proper posets of size $\al_1$ and for collections of $\l$--many dense sets.\end{definition}

\begin{fact}
For every cardinal $\l\geq\o_1$, $\textsf{PFA}^{\mbox{fin}}(\o_1)_\l$ implies the following.

\begin{itemize}
\it[(1)] $\MA_{\o_1}$

\it[(2)] $(\cdot)^\tau_{\o_1}$ for every $\tau<\omega_1$.

\it[(3)] For every set $\mtcl F$ of size $\l$ consisting of functions from $\o_1$ into $\o_1$ there is a function $g:\o_1\into \o_1$ such that both $\{\n<\o_1\,:\,f(\n)<g(\n)\}$ and $\{\n<\o_1\,:\,f(\n)=g(\n)\}$ are unbounded for every $f\in\mtcl F$. In particular, $\mathfrak d(\o_1)>\l$, where $\mathfrak d(\o_1)$ is the dominating number for $^{\o_1}\o_1$ and where domination is understood relative to the ideal of countable sets.

\it[(4)] For every $\mathcal R\sub [\o_1]^{\al_1}$ of size $\l$ there is some $b\in[\o_1]^{\al_1}$ such that $\av a\cap b\av=\al_1$ and $\av a\setminus b\av=\al_1$ for every $a\in\mtcl R$. In particular, $\mathfrak r(\o_1)>\l$, where $\mathfrak r(\o_1)$ is the reaping number for $[\o_1]^{\al_1}$ and where the reaping relation is understood again relative to the ideal of countable sets.
\end{itemize}
\end{fact}

\begin{proof}
The proofs of (1) and (2) are either immediate or as the corresponding proofs from $\MA^{1.5}_\l$. (3) and (4) follow from considering Baumgartner's forcing for adding a club $C\sub\o_1$ by finite approximations.
\end{proof}

The following result is straightforward.

\begin{fact}\label{fin_proper} Every finitely proper poset of size $\al_1$ has the $\al_{1.5}$--chain condition. In particular, for every cardinal $\l$, $\MA^{1.5}_\l$ implies $\textsf{PFA}^{\mbox{fin}}(\o_1)_\l$.\end{fact}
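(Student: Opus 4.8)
The plan is to prove Fact \ref{fin_proper}, which has two parts: (i) every finitely proper poset of size $\al_1$ is in $\Upsilon$, and (ii) $\textsc{MA}(\Upsilon)_\l$ implies $\textsc{PFA}^\star(\o_1)_\l$ for every cardinal $\l$. The second part follows immediately from the first together with Definitions \ref{def00} and the definition of $\textsc{PFA}^\star(\o_1)_\l$: if $\mtbb P$ is finitely proper of size $\al_1$ and $(i)$ holds, then $\mtbb P$ is (isomorphic to a poset) in $\Upsilon$, so any collection of $\l$-many dense subsets of $\mtbb P$ can be met by a filter. So the whole content is in $(i)$.

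For part $(i)$, let $\mtbb P$ be finitely proper of size $\al_1$. First I would produce a regular representation. Since $|\mtbb P| \leq \al_1$, fix a bijection between $\mtbb P$ and a subset of $\o_1$, and then replace each $p \in \mtbb P$ by the ordered pair $(\nu_p, p)$ where $\nu_p < \o_1$ is the ordinal coding $p$; order these pairs by $(\nu_p, p) \le (\nu_q, q)$ iff $p \le_{\mtbb P} q$. Call the resulting poset $\mtbb P^\ast$; it is isomorphic to $\mtbb P$ and satisfies clause (a) of the definition of regular. It also has the $\al_2$-chain condition trivially, since $|\mtbb P^\ast| = |\mtbb P| \le \al_1 < \al_2$. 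It remains to verify clause (b): for every regular $\l \ge |TC(\mtbb P^\ast)|^+$ there is a club $D \sub [H(\l)]^{\al_0}$ such that for every finite $\{N_i : i < m\} \sub D$ and every condition $(\nu, X)$ with $\nu < \min\{N_i \cap \o_1 : i < m\}$, there is an extension of $(\nu, X)$ that is $(N_i, \mtbb P^\ast)$-generic for all $i < m$.

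The key step is to choose $D$ to consist of those countable $N \elsub H(\l)$ with $\mtbb P^\ast \in N$ (equivalently $\mtbb P \in N$). This is a club. Now fix a finite $\{N_i : i < m\}\sub D$ and a condition $(\nu, X)$ with $\nu < \min_i (N_i \cap \o_1)$. The crucial observation is that, because the underlying set of $\mtbb P^\ast$ has size $\le \al_1$ and its first coordinates are countable ordinals, any condition $(\nu, X)$ with $\nu$ below $N_i \cap \o_1$ in fact lies in $N_i$: indeed $\mtbb P^\ast \in N_i$, and $(\nu, X)$ is determined by the ordinal $\nu < N_i \cap \o_1 \sub N_i$ via the coding, so $(\nu, X) \in N_i$ by elementarity (every element of $\mtbb P^\ast$ is coded by a countable ordinal, and $N_i$ contains all countable ordinals below $N_i \cap \o_1$). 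Hence $(\nu, X) \in \bigcap_{i<m} N_i \cap \mtbb P^\ast$. Wait — but the footnote explicitly warns that we are not assuming $(\nu,X)\in\bigcap N_i$; nonetheless, for this particular representation it does follow, and that is exactly what makes the verification go through. Now apply finite properness of $\mtbb P^\ast$ (which is isomorphic to $\mtbb P$, hence finitely proper) to the finite sequence $\{N_0,\dots,N_{m-1}\}$ and the condition $p = (\nu, X) \in N_0 \cap \mtbb P^\ast$: there is $q \in \mtbb P^\ast$ extending $(\nu, X)$ and $(N_i, \mtbb P^\ast)$-generic for every $i < m$. This $q$ witnesses clause (b), completing the proof that $\mtbb P^\ast \in \Upsilon$, hence $\mtbb P$ admits a regular representation with the $\al_2$-c.c., i.e. $\mtbb P \in \Upsilon$.

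The main (and really only) obstacle is the bookkeeping around the coding: one must be careful that the bijection $\mtbb P \to \o_1$ is chosen so that $(\nu, X) \in N_i$ whenever $\nu < N_i \cap \o_1$, and that this coding is itself definable from parameters in each $N_i$ (so that elementarity applies). The cleanest way is to build the coding into the definition of $\mtbb P^\ast$ as an object of $H(\l)$ and note $\mtbb P^\ast \in N_i$ for all $i$; then "$(\nu, X)$ is the condition coded by $\nu$" is a formula with parameter $\mtbb P^\ast \in N_i$, and $\nu \in N_i$, so $(\nu,X) \in N_i$. Everything else — the club-ness of $D$, the $\al_2$-c.c., and the transfer of finite properness along the isomorphism — is routine.
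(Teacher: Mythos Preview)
Your proof is correct and is precisely the straightforward argument the paper has in mind. The paper gives no proof beyond calling the fact ``straightforward,'' and your write-up simply fills in those details: code $\mtbb P$ by countable ordinals so that the first coordinate determines the condition, observe that $\av\mtbb P\av\leq\al_1$ gives the $\al_2$--c.c.\ for free, and then note that the hypothesis $\nu<\min_i(N_i\cap\o_1)$ forces $(\nu,X)\in\bigcap_i N_i$, at which point finite properness applies directly. Your remark that the footnote's caveat (that $(\nu,X)$ need not lie in $\bigcap_i N_i$) is circumvented here precisely because of the size-$\al_1$ coding is exactly the point of the argument.
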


 The proof of the main theorem in \cite{ASP} essentially shows the consistency of $\textsf{PFA}^{\mbox{fin}}(\o_1)_\l$ for arbitrary $\l$.

\section{The consistency of $\MA^{1.5}_\l$}\label{the_forcing_construction}

Our main theorem is the following:

\begin{theorem}\label{mainthm} ($\textsf{CH}$) Let $\k\geq\o_2$ be a regular cardinal such that $\m^{\al_0}< \kappa$ for all $\m < \k$ and $\diamondsuit(\{\a<\k\,:\,\cf(\a)\geq\o_1\})$ holds. Then there is a proper forcing notion $\mtcl P$ of size $\k$ with the $\al_2$--chain condition such that the following statements hold in the generic extension by $\mtcl P$:

\begin{itemize}

\it[(1)] $2^{\al_0}=\k$

\it[(2)] $\MA^{1.5}_{<2^{\al_0}}$
\end{itemize}
\end{theorem}

The proof of Theorem \ref{mainthm} is an elaboration of the proof of the main theorem in \cite{ASP}.  Our approach in that paper consisted in building a certain type of finite support forcing iteration $\la\mtcl P_\a\,:\,\a\leq\k\ra$ of length $\k$ (in a broad sense of `forcing iteration')\footnote{In the sense that $\mtcl P_\b$ is a regular extension of $\mtcl P_\a$ whenever $\a<\b\leq\k$. It follows of course that $\la\mtcl P_\a\,:\,\a\leq\k\ra$ is forcing--equivalent to a forcing iteration $\la\mtbb P_\a\,:\,\a\leq\k\ra$ in the ordinary sense (that is, such that $\mtbb P_{\a+1}\cong\mtbb P_\a\ast\dot{\mtbb Q}_\a$ for all $\a$, where $\dot{\mtbb Q}_\a$ is a $\mtbb P_\a$--name for a poset), but such a presentation of $\la\mtcl P_\a\,:\,\a\leq\k\ra$ is not really natural.} using what one may describe as finite ``symmetric'' systems of countable elementary substructures of a fixed $H(\k)$\footnote{This $\k$ is exactly the value that $2^{\al_0}$ attains at the end of the construction.} as side conditions. These systems of structures were added at the first stage $\mtcl P_0$ of the iteration.
Roughly speaking, the fact that the supports of the conditions in the iteration were finite ensured that the inductive proofs of the relevant facts -- mainly the $\al_2$--c.c.\ of all $\mtcl P_\a$ and their properness -- went through. The use of the sets of structures as side conditions was crucial in the proof of properness.\footnote{For more on the motivation of this type of construction see \cite{ASP}.}

Here we change the proof from \cite{ASP} in basically three ways. One of the changes is in the proof of the $\al_2$--chain condition of the iteration (Lemma \ref{cc}). The induction now is a bit different from the one in the corresponding proof from \cite{ASP} (and it has to be as the iterands we are now considering are no longer of size $\al_1$). In fact, our forcing has the $\al_2$--c.c.\ but it is not $\al_2$--Knaster (on the other hand, the forcing in \cite{ASP} is $\al_2$--Knaster). Another, more crucial, difference is the presence of a diamond--sequence which ensures that all posets with the $\al_{1.5}$--c.c.\ (with no restrictions on their size)
occurring in the final extension have been dealt with at $\k$--many stages during the iteration.
A third difference between the present construction and the construction from \cite{ASP} is the fact that our systems of structures now exhibit a weaker form of symmetry than the ones in \cite{ASP}; we call the systems in the present paper \textit{partial symmetric systems}, as opposed to \textit{symmetric systems} as in \cite{ASP} (see Subsection \ref{partial-symm-systems} for more on the difference between these two forms of symmetry). The need for the weaker form of symmetry comes from the part of the argument using a diamond--sequence where we infer that all $\al_{1.5}$--c.c.\ posets occurring in the final model are `captured' along the iteration.
It is perhaps worth pointing out that the proof of properness now is almost identical to the corresponding proof from \cite{ASP} (modulo notational changes).

Theorem \ref{mainthm} shows that all forcing axioms of the form $\MA^{1.5}_{<\k}$, for a fixed reasonably defined cardinal $\k$, are consistent (relative to the consistency of $\ZFC$).
This theorem shows also that no axiom of the form $\MA^{1.5}_\l$ decides the size of the continuum and thus, by Fact \ref{fin_proper}, fits nicely within the ongoing project of showing whether or not weak fragments of $\textsf{BPFA}$ imply $2^{\al_0} = \al_2$.\footnote{$\textsf{BPFA}$ is the assertion that $H(\o_2)^{\V}$ is a $\S_1$--elementary substructure of $H(\o_2)^{\V^{\mtbb P}}$ for every proper poset $\mtbb P$ (\cite{Bagaria}). J.\ Moore showed that $\textsf{BPFA}$ implies $2^{\al_0}=\al_2$ (\cite{Moore}).} The problem whether (consequences of) forcing axioms for classes of posets with small chain condition decide the size of the continuum does not seem to have received much attention in the literature so far.\footnote{Usually the focus has been on deriving $2^{\al_0}=\al_2$ from bounded forms of forcing axioms, that is forms of forcing axioms in which one considers only small maximal antichains but where the posets are allowed to have large antichains as well.} One place where the problem has been addressed is of course our \cite{ASP}. Prior to \cite{ASP}, M.\ Foreman and P.\ Larson showed in an unpublished note (\cite{FOREMAN-LARSON}) that the forcing axiom (relative to collections of $\al_1$--many dense sets) for the class of posets of size $\al_2$ preserving stationary subsets of $\o_1$ implies $2^{\al_0}=\al_2$. Several natural problems in this area remain open. For example it is not known whether the forcing axiom (relative to collections of $\al_1$--many dense sets) for the class of semi-proper posets of size $\al_2$ implies $2^{\al_0}=\al_2$, and the same is open for the forcing axiom for the class of all posets of size $\al_1$ preserving stationary subsets of $\o_1$, as well as for the forcing axiom for the class of all proper posets of size $\al_1$. Let us denote these two forcing axioms by, respectively, $\textsf{MM}(\o_1)$ and $\textsf{PFA}(\o_1)$. It is open whether or not $\textsf{MM}(\o_1)$ is equivalent to $\textsf{PFA}(\o_1)$,\footnote{On the other hand, $\textsf{PFA}(\o_1)$ is trivially equivalent to the forcing axiom for the class of semi-proper posets of size $\al_1$.} and even whether $\textsf{MM}(\o_1)$ has consistency strength above $\ZFC$.\footnote{$\textsf{PFA}(\o_1)$ can be easily forced over $\textsc{ZFC}$ model. Also, the existence of a non--proper poset of size $\al_1$ preserving stationary subsets of $\o_1$ is consistent. In fact, Hiroshi Sakai has constructed such a poset assuming a suitably strong version of $\diamondsuit_{\o_1}$ which holds in $\textbf L$ and which can always be forced.}

As we mentioned before, Theorem \ref{mainthm} shows in particular that the forcing axiom $\MA^{1.5}_{<\al_2}$ has the same consistency strength as $\ZFC$. Other articles dealing with the consistency strength of other (related) fragments of $\textsf{PFA}$ are \cite{Miyamoto}, \cite{HamkinsJohnstone}, \cite{NeemanSchimmerlingI}, and \cite{Neeman}.

The rest of the paper is organised as follows. In the following subsection we introduce the central notion of partial symmetric system of submodels and prove the main amalgamation results for these systems. Subsection \ref{the_forcing_construction} contains the construction of our iteration $\la\mtcl P_\a\,:\,\a\leq\k\ra$ ($\mtcl P_\k$ will witness Theorem \ref{mainthm}).
Finally, Subsection \ref{the_main_facts} contains proofs of the main facts about $\la\mtcl P_\a\,:\,\a\leq\k\ra$.
Theorem \ref{mainthm} follows then easily from these facts.

\subsection{Partial symmetric systems of submodels}\label{partial-symm-systems}

Throughout the paper, if $N$ and $N'$ are models for which that there is a (unique) isomorphism from $(N, \in)$ into $(N', \in)$, then we denote this isomorphism by $\Psi_{N, N'}$.  For this subsection let us fix a regular cardinal $\t$ and a predicate $T\sub H(\t)$. In this situation we will tend to refer to a structure $(N, \in, T\cap N)$ by the simpler expression $(N, \in, T)$.

\begin{definition}\label{hom}
Let $\{N_i\,:\,i<m\}$ be a finite set of countable subsets of $H(\t)$. We will say that \emph{$\{N_i\,:\,i<m\}$ is a partial $T$--symmetric system (of elementary substructures of $(H(\t), \in, T)$)} if and only if the following holds.

\begin{itemize}

\it[$(A)$] For every $i<m$, $(N_i, \in, T)$  is an elementary substructure of $(H(\t), \in, T)$.

\it[$(B)$] Given distinct $i$, $i'$ in $m$, if $\d_{N_i}=\d_{N_{i'}}$, then there is a unique isomorphism $$\Psi_{N_i, N_{i'}}:(N_i, \in, T)\into (N_{i'}, \in, T)$$

\noindent Furthermore, we ask that $\Psi_{N_i, N_{i'}}$ be the identity on $N_i\cap N_{i'}$.

\it[$(C)$] For all $i$, $i'$, $j$ in $m$, if $N_j\in N_i$ and $\d_{N_i}=\d_{N_{i'}}$, then there is some $j'<m$ such that $\Psi_{N_i, N_{i'}}(N_j)=N_{j'}$.

\it[$(D)$] For all $i$, $j<m$, if $\d_{N_i}<\d_{N_j}$, then there is some $i'<m$ such that
\begin{enumerate}
\it $\d_{N_{i'}}=\d_{N_i}$,
\it $N_{i'}\in N_j$, and
\it $N_i\cap N_j = N_{i'}\cap N_i$.
\end{enumerate}

\end{itemize}

\end{definition}

We will often talk about partial symmetric systems of elementary substructures, without mentioning $T$, or even just partial symmetric systems, in contexts where $T$ is understood or not relevant.

The present definition of symmetry is weaker than the one in \cite{ASP} in the following sense: In the definition of \emph{symmetric system} from \cite{ASP}, instead of condition (D) we had the following:

\begin{itemize}
\it[$(D)^+$] For all $i$, $j<m$, if $\d_{N_i}<\d_{N_j}$, then there is some $j'<m$ such that
\begin{enumerate}
\it $\d_{N_{j'}}=\d_{N_j}$,  and
\it $N_i\in N_{j'}$.
\end{enumerate}

\end{itemize}

It is easy to verify that every finite system of structures satisfying $(A)$, $(B)$, $(C)$ and $(D)^+$ (i.e., satisfying the original definition of symmetric system) satisfies $(A)$--$(D)$: Given $i$, $j$ as in the hypothesis of $(D)$, if $N_{j'}$ is such that $\d_{N_{j'}} =\d_{N_j}$ and $N_i\in N_{j'}$, then $N_{i'}:=\Psi_{N_{j'}, N_j}(N_i)$ witnesses $(D)$ relative to $N_i$ and $N_j$.

The following lemma is immediate.

\begin{lemma}\label{iso1}
Let $N$ and $N'$ be countable elementary substructures of $(H(\t), \in, T)$. Suppose $\mtcl N \in N$ is a partial $T$--symmetric system of elementary substructures of $H(\t)$ and $\Psi: (N, \in, T)  \into (N', \in, T)$ is an isomorphism. Then $\Psi(\mtcl N)= \Psi``\mtcl N$  is also a partial $T$--symmetric system of elementary substructures of $H(\t)$.
\end{lemma}

We will need three amalgamations lemmas for the proof of Theorem \ref{mainthm}. The first of the lemmas is Lemma \ref{iso2}. This lemma will be used in case $\a=0$ of the inductive proof of Lemma \ref{horribilis}.

\begin{lemma}\label{iso2}
Let $\mtcl N$ be a partial $T$--symmetric system of elementary substructures of $H(\t)$ and let $N\in\mtcl N$.
Then the following holds.
\begin{itemize}
\it[(1)] $\mtcl N\cap N$ is a partial $T$--symmetric system.

\it[(2)] Suppose $\mtcl N^\ast \in  N$ is a partial $T$--symmetric system such that $\mtcl N \cap N \sub \mtcl N^\ast$.

\noindent Let $$\mtcl M=\mtcl N\cup\bigcup\{\Psi_{N_0, N_1}``(\mtcl N^\ast \cap N_0) \mid N_0, N_1\in \mtcl N,\,\d_{N_0}=\d_{N_1}\leq \d_N\}$$

Then,
\begin{itemize}
\it[(i)] for every $N_0\in\mtcl N$ such that $\d_{N_0}\leq\d_N$,
\begin{itemize}
\it[(a)] $\mtcl M\cap N_0=\Psi_{N_1, N_0}``(\mtcl N^\ast\cap N_1)$ for every $N_1\in\mtcl N\cap(N\cup\{N\})$ such that $\d_{N_1}=\d_{N_0}$, and
\it[(b)] $\mtcl M\cap N_0$ is a partial $T$--symmetric system,
\end{itemize} and
\it[(ii)] $\mtcl M$ is a partial $T$--symmetric system.
\end{itemize}
\end{itemize}
\end{lemma}

\begin{proof} It is immediate to check that all conditions in the definition of partial symmetric system are inherited by $\mtcl N\cap N$, which gives (1).

Let us start the proof of (2). Let $N_0$ be as in the hypotheses of (i). Let $N_1\in\mtcl N\cap (N\cup\{N\})$ be of the same height as $\d_{N_0}$ and let us check that $\mtcl M\cap N_0=\Psi_{N_1, N_0}``(\mtcl N^\ast\cap N_1)$. Obviously, $\Psi_{N_1, N_0}``(\mtcl N^\ast\cap N_1)\sub N_0\cap\mtcl M$ by the construction of $\mtcl M$.  For the reverse inclusion, suppose $N_2$, $N_3\in\mtcl N$ are such that $\d_{N_2}=\d_{N_3}\leq\d_N$ and $M \in\mtcl N^\ast\cap N_2$ is such that $\Psi_{N_2, N_3}(M)\in N_0$. Since $M \in \mtcl N^\ast\sub N$, we may assume without loss of generality that $N_2=N$ if $\d_{N_2}=\d_N$ by conditions (B)--(D) in Definition \ref{hom} for $\mtcl N$, and $N_2\in N$ if $\d_{N_2}<\d_N$ by conditions (B) and (C).  We want to see that $\Psi_{N_2, N_3}(M)=\Psi_{N_1, N_0}(M')$ for some $M'\in\mtcl N^\ast\cap N_1$. We have two cases for this.

Suppose first that $\d_{N_1}\leq\d_{N_2}$. Let $N_0'=N_3$ if $\d_{N_0}=\d_{N_3}$ and,  if $\d_{N_0}<\d_{N_3}$, by condition (D) for $\mtcl N$ let $N_0'\in N_3\cap\mtcl N$ be such that $\d_{N_0'}=\d_{N_0}$ and $N_3\cap N_0=N_0\cap N_0'$. Then, if $\d_{N_0}<\d_{N_3}$, $\Psi_{N_3, N_2}(N_0')\in\mtcl N$ and $M\in  \mtcl N^\ast\cap\Psi_{N_3, N_2}(N_0')$ as $\Psi_{N_2, N_3}(M)\in N_0$.  In that case, $M':=\Psi_{\Psi_{N_3, N_2}(N_0'), N_1}(M)$ is as desired since $M'\in\mtcl N^\ast$, which is true because $N_1$ and $\Psi_{N_3, N_2}(N_0')$ are both in $\mtcl N\cap N\sub\mtcl N^\ast$, $M\in\mtcl N^\ast$, and $\mtcl N^\ast$ satisfies (C). In the case $\d_{N_0}=\d_{N_3}$ we can of course take $M'=\Psi_{N_2, N_1}(M)$.

Finally, suppose $\d_{N_2}<\d_{N_1}$. By (D) applied to $\mtcl N$, let $N_3'\in N_0\cap\mtcl N$ be such that $\d_{N_3'}=\d_{N_3}$ and $N_0\cap N_3=N_3\cap N_3'$. Then $\Psi_{N_2, N_3}(M)\in N_3\cap N_3'$, and therefore $\Psi_{N_2, N_3}(M)=\Psi_{N_2, N_3'}(M)=\Psi_{\Psi_{N_0, N_1}(N_3'), N_3'}(M')$ for $M' = \Psi_{N_2, \Psi_{N_0, N_1}(N_3')}(M)$. But $M'\in\mtcl N^\ast$ since $N_2$ and $\Psi_{N_0, N_1}(N_3')$ are both in $N\cap\mtcl N\sub \mtcl N^\ast$, $M\in \mtcl N^\ast$, and $\mtcl N^\ast$ satisfies (C). Finally, $\Psi_{\Psi_{N_0, N_1}(N_3'), N_3'}=\Psi_{N_1, N_0}\restr \Psi_{N_0, N_1}(N_3')$, which gives what we wanted.

Now, since $\mtcl M\cap N_0=\Psi_{N_1, N_0}``(\mtcl N^\ast\cap N_1)$ for some (equivalently, every) $N_1\in \mtcl N\cap (N\cup\{N\})$ of the same height as $N_0$, and since $\mtcl N^\ast\cap N_1$ is a partial symmetric system for every such $N_1$ by (1), $\mtcl M\cap N_0$ is a partial symmetric system by Lemma \ref{iso1}.

Let us proceed to the proof of (ii). It is clear that $\mtcl M$ satisfies condition (A) in Definition \ref{hom} thanks to Lemma \ref{iso1}.

To see that it satisfies condition (B), it suffices to show that if $Q_0$, $Q_1\in\mtcl M$ are such that $\d_{Q_0}=\d_{Q_1}\leq\d_N$, then $Q_0$ and $Q_1$ are isomorphic and the unique isomorphism between these structures fixes $Q_0\cap Q_1$. By symmetry of the situation we may assume that there are $N_0$, $N_1$, $M_0$ and $M_1$ in $\mtcl N$ such that $\d_{N_0}=\d_{N_1} \leq \d_{M_0}=\d_{M_1}\leq\d_N$ and such that $Q_0=\Psi_{N_0, N_1}(Q)$ and $Q_1=\Psi_{M_0, M_1}(Q^\ast)$ for some $Q$ and $Q^\ast$ in $\mtcl N^\ast$. By (B) for $\mtcl N^\ast$ there is a unique isomorphism $\Psi_{Q, Q^\ast}$ between $Q$ and $Q^\ast$ and $\Psi_{Q, Q^\ast}$ fixes $Q \cap Q^\ast$. Then $\Psi:=\Psi_{M_0, M_1}\circ \Psi_{Q, Q^\ast}\circ (\Psi_{N_1, N_0}\restr Q_0)$ is of course an isomorphism between $Q_0$ and $Q_1$. To see that $\Psi$ fixes $Q_0\cap Q_1$, suppose $x\in Q_0\cap Q_1$. Let $N_1'=M_1$ if $N_1$ and $M_1$ have the same height and, if $\d_{N_1}<\d_{M_1}$, let $N_1'\in M_1\cap\mtcl N$ be such that $N_1'\cap N_1=N_1\cap M_1$. Then $Q':=\Psi_{N_0, N_1'}(Q)\in\mtcl M\cap N_1'\sub\mtcl M\cap M_1$.  Since $x\in N_1'\cap N_1$, we have $\Psi_{N_1', N_1}(x)=x$. In particular $x\in Q'$ since $\Psi_{N_1', N_1}(Q')=Q_0$ and $x\in Q_0$. But then $x\in Q'\cap Q_1$, and since both $Q'$ and $Q_1$ are in $\mtcl M\cap M_1$ and $\mtcl M\cap M_1$ is a partial symmetric system by (i), the unique isomorphism $\Psi_{Q', Q_1}$ between $Q'$ and $Q_1$ fixes $x$. But then $\Psi=\Psi_{Q', Q_1}\circ(\Psi_{N_1, N_1'}\restr Q_0)$ since $\Psi_{N_1', N_1}(Q') = Q_0$, and therefore $\Psi(x)=x$ since $\Psi_{N_1', N_1}(x)=x$.

Let us verify now (C) for $\mtcl M$. Let $N_0$, $N_1\in\mtcl N$ be of the same height and let $M\in\mtcl M\cap N_0$. We want to see that $\Psi_{N_0, N_1}(M)\in\mtcl M$.  Without loss of generality we may assume $\d_M<\d_N$. The case $\d_{N_0}\leq \d_N$ follows immediately from part (i) (a). For the case $\d_{N_0}>\d_N$, let $N'\in N_0\cap \mtcl N$ be such that $\d_{N'} \leq \d_N$ and $M \in N'$ (since $\mtcl N$ satisfies (C), we may also assume that $M \in \mtcl M \setminus  \mtcl N$). Then $\Psi_{N_0, N_1}(N')\in \mtcl N\cap N_1$ by (C) for $\mtcl N$, and we are done again by the previous case since $\Psi_{N', \Psi_{N_0, N_1}(N')}(M)=\Psi_{N_0, N_1}(M)$.

We still need to consider the situation when $N_0$ and $N_1$ are in $\mtcl M\setminus\mtcl N$, $\d_{N_0}=\d_{N_1}$, $M\in\mtcl M\cap N_0$, and at least one of $N_0$, $N_1$ is not in $\mtcl N$.  Suppose first $N_0\notin \mtcl N$ and $N_1\notin \mtcl N$. Then there is some $M_0\in\mtcl N$ such that $\d_{M_0}\leq\d_N$ and $N_0\in M_0$ and such that $N_0=\Psi_{\ov{M}_0, M_0}(\ov{N}_0)$ for some $\ov{M}_0\in \mtcl N\cap (N\cup\{N\})$ of the same height as $M_0$ and some $\ov{N}_0\in\mtcl N^\ast\cap \ov{M}_0\in \ov{N}_0$. We can also find $M_1\in\mtcl N$ such that $\d_{M_1}\leq\d_N$ and $N_1\in M_1$ and such that $N_1=\Psi_{\ov{M}_1, M_1}(\ov{N}_1)$ for some $\ov{M}_1\in \mtcl N\cap (N\cup\{N\})$ of the same height as $M_1$ and some $\ov{N}_1\in\mtcl N^\ast\cap \ov{M}_1$. Let $\e\in\{0, 1\}$ with $\d_{M_\e}$ minimal, let $M_{\e}'=M_{1-\e}$ if $\d_{M_0}=\d_{M_1}$ and, if $\d_{M_\e}<\d_{M_{1-\e}}$, let $M_{\e}'\in\mtcl N\cap M_{1-\e}$ be of the same height as $M_\e$ and such that $M_{1-\e}\cap M_\e= M_\e\cap M_{\e}'$. Then $\Psi_{M_\e, M_{\e}'}(N_\e)\in M_{1-\e}\cap\mtcl M$, and we can finish thanks to part (i) (b) by looking at  $M_{1-\e}\cap\mtcl M$ and taking the appropriate isomorphic copies. Finally, in the cases when $N_0\notin \mtcl N$ and $N_1\in\mtcl N$ and when $N_0\in\mtcl N$ and $N_1\notin\mtcl N$, we can run an easier variant of the argument above.

Finally let us verify (D) for $\mtcl M$. Suppose $M_0$, $M_1$ are both in $\mtcl M$ and $\d_{M_0}<\d_{M_1}$. We want to see that there is some $M'_0\in\mtcl M\cap M_1$ of the same height as $M_0$ and such that $M'_0\cap M_0=M_0\cap M_1$. We may of course assume that $M_0$ and $M_1$ are not both in $\mtcl N$.

Suppose $M_0\notin \mtcl N$ and $M_1\in\mtcl N$. Let $N_0\in\mtcl N$ be such that $\d_{N_0}\leq\d_N$ and $M_0\in N_0$. If $\d_{N_0}=\d_{M_1}$, then we may take $M'_0=\Psi_{N_0, M_1}(M_0)$, and if $\d_{N_0}<\d_{M_1}$ we may take $M'_0= \Psi_{N_0, N'_0}(M_0)$, where $N'_0\in\mtcl N\cap M_1$ is of the same height as $N_0$ and such that $N'_0\cap N_0=N_0\cap M_1$. If $\d_{N_0}>\d_{M_1}$, then there is some $M'_1\in\mtcl N\cap N_0$ of the same height as $M_1$ and such that $M_1\cap N_0=M'_1\cap M_1$.
By part (i) (b) for $\mtcl M\cap N_0$ there is some $M_0^\ast\in \mtcl M\cap M_1'$ of the same height as $M_0$ and such that $M_0^\ast\cap M_0=M_1'\cap M_0$. Now it is easy to check that $\Psi_{M'_1, M_1}(M_0^\ast)\in\mtcl M$ satisfies the desired conclusion.

Next, suppose $M_0\in\mtcl N$ and $M_1\notin\mtcl N$. Let $N_1\in\mtcl N$ be such that $\d_{N_1}\leq\d_N$ and $M_1\in N_1$.  Since $\d_{M_0}<\d_{N_1}$, there is some $M_0^\ast\in \mtcl N\cap N_1$ of the same height as $M_0$ and such that $M_0^\ast\cap M_0=M_0\cap N_1$. By part (i) (b) for $\mtcl M\cap N_1$, there is some $M'_0\in \mtcl M\cap M_1$ of the same height as $M_0^\ast$ and such that $M_0^\ast\cap M'_0=M_0^\ast\cap M_1$. Now it is easy to check that $M'_0$ is as desired.

Finally, suppose that none of $M_0$ and $M_1$ is in $\mtcl N$. Let $N_0$, $N_1\in\mtcl N$ be such that $\d_{N_0}\leq\d_N$, $\d_{N_1}\leq\d_N$, $M_0\in N_0$ and $M_1\in N_1$. Suppose $\d_{N_0}\leq\d_{N_1}$. Let $N'_0=N_1$ if $N_0$ and $N_1$ have the same height and, if $\d_{N_0}<\d_{N_1}$, let $N'_0\in \mtcl N\cap N_1$ be of the same height as $N_0$ and such that $N_0\cap N_1=N_0\cap N'_0$. Then, $M^\ast_0:=\Psi_{N_0, N'_0}(M_0)\in\mtcl M\cap N_1$. By (i) (b) for $\mtcl M\cap N_1$ we may now find $M_0'\in M_1$ of the same height as $M_0'$ and such that $M_1\cap M_0^\ast=M_0^\ast\cap M_0'$.  It is then easy to check that $M_0'$ is as desired. The other case is when $\d_{N_1}<\d_{N_0}$. Let $N'_1\in\mtcl N\cap N_0$ be such that  $\d_{N'_1}=\d_{N_1}$ and $N_0\cap N_1=N_1\cap N'_1$. By part (i) (b) for $\mtcl M\cap N_0$ we may pick $M_0^\ast\in N'_1\cap\mtcl M$ of the same height as $M_0$ and such that $N'_1\cap M_0=M^\ast_0\cap M_0$. Let $M^{\ast\ast}_0=\Psi_{N'_1, N_1}(M_0^\ast)\in\mtcl M$. Now, by (i) (b) for $\mtcl M\cap N_1$ we may find $M_0'\in\mtcl M\cap M_1$ of the same height as $M_0^{\ast\ast}$ and such that $M_0'\cap M_0^{\ast\ast}=M_0^{\ast\ast}\cap M_1$, and it is easy to check that $M_0'$ is as desired.
\end{proof}

Next comes our second amalgamation lemma for partial symmetric systems. It will be used in the proof of Lemma \ref{cc}.

\begin{lemma}\label{iso3}
Let $\mtcl N_0$ and $\mtcl N_1$ be partial $T$--symmetric systems of elementary substructures of $H(\t)$.
Suppose that $(\bigcup\mtcl N_0)\cap(\bigcup\mtcl N_1)=R$ and that, for some $m<\o$, there are enumerations $(N^0_i)_{i <m}$ and $(N^1_i)_{i <m}$ of $\mtcl N_0$ and $\mtcl N_1$, respectively, together with an isomorphism $\Psi$ between $\la \bigcup\mtcl N_0,\in, T, R, N^0_i\ra_{i<m}$ and $\la \bigcup\mtcl N_1,\in, T, R, N^1_i\ra_{i<m}$ which is the identity on $R$.  Then $\mtcl N_0\cup\mtcl N_1$ is a partial $T$--symmetric system.
\end{lemma}

\begin{proof}
$\mtcl N_0\cup\mtcl N_1$ obviously satisfies condition (A) in Definition \ref{hom}.

For condition (B), we must show that if $i_0$, $i_1< m$ are such that $\d_{N^0_{i_0}}=\d_{N^1_{i_1}}$, then the isomorphism
$\Psi_{N^0_{i_0}, N^1_{i_1}}:=\Psi\circ\Psi_{N^0_{i_0}, N^0_{i_1}}$ fixes $N^0_{i_0}\cap N^1_{i_1}$.
Now, if $x\in N^0_{i_0}\cap N^1_{i_1}$, then $x\in R\cap N^0_{i_0}$, which implies that $\Psi(x)=x\in N^1_{i_0}\cap N^1_{i_1}$ as $\Psi$ is an isomorphism between the structures $\la\bigcup\mtcl N_0,\in, N^0_i\ra_{i<m}$ and $\la \bigcup\mtcl N_1,\in, N^1_i\ra_{i<m}$. But then $x \in N^0_{i_0}\cap N^0_{i_1}$ again by the fact that $\Psi$ is an isomorphism between $\la\bigcup\mtcl N_0,\in, N^0_i\ra_{i<m}$ and $\la \bigcup\mtcl N_1,\in, N^1_i\ra_{i<m}$, which implies that $\Psi_{N^0_{i_0}, N^0_{i_1}}(x)=x$ and hence that $((\Psi\restr N^0_{i_1}) \circ\Psi_{N^0_{i_0}, N^0_{i_1}})(x) = \Psi_{N^0_{i_0}, N^1_{i_1}}(x)=x$.

A similar argument establishes condition (C) for $\mtcl N_0\cup\mtcl N_1$:
Let $M_1$, $M_2$, $M_3 \in \mtcl N_0\cup\mtcl N_1$ be such that $M_2 \in M_1$ and $\delta_{M_1} = \delta_{M_3}$. We must prove that $\Psi_{M_1, M_3}(M_2)$ is also in $\mtcl N_0\cup\mtcl N_1$. We may clearly assume that there are indices $i$, $j \in \{1,2,3\}$ such that $M_i \in \mtcl N_0$  and $M_j \in \mtcl N_1$. The case when $M_1$ and $M_3$ are both in $\mtcl N_1$ and $M_2$ is in $\mtcl N_0$ can be treated as follows. First note that there are $i_1$ and $i_2$ such that $M_1= N^1_{i_1}$ and $M_2 = N^{0}_{i_2}$. As $M_2 \in M_1$ and $\Psi$ is an isomorphism fixing $R$ (in particular, $M_2$), $N^{0}_{i_2} \in N^{0}_{i_1}$. But $\Psi_{N^0_{i_2}, N^1_{i_2}}=\Psi_{N^0_{i_1}, N^1_{i_1}} \restr N^0_{i_2}$ and this isomorphism also fixes $M_2$. So $M_1$, $M_2= N^{1}_{i_2}$ and $M_3$ are in $\mtcl N_1$. The last case that needs to be considered is when $M_3\in\mtcl N_0$ and $M_1\in\mtcl N_1$. Just as before, we can ensure the existence of $i_1$, $i_2$ and $i_3$ such that $M_3 = N^{0}_{i_3}$, $M_1=N^1_{i_1}$ and $M_2 =N^{1}_{i_2}$. Finally, let $i_4$ be such that $N^1_{i_4}=\Psi_{N^1_{i_1}, N^1_{i_3}}(N^1_{i_2})$ and note that $N^0_{i_4}= \Psi_{M_1,M_3}(M_2)$.

Finally, as to condition (D) for $\mtcl N_0\cap\mtcl N_1$, it suffices to show that if $i_0$, $i_1<m$ are such that $\d_{N^0_{i_0}}<\d_{N^1_{i_1}}$, then there is some $i$ such that $\d_{N^1_i}=\d_{N^0_{i_0}}$, $N^1_i \in N^1_{i_1}$, and $N^1_{i_1}\cap N^0_{i_0} = N^0_{i_0}\cap N^1_i$. For this, by condition (D) for $\mtcl N_0$ we may find $i<m$ such that $\d_{N^0_i}=\d_{N^0_{i_0}}$, $N^0_i\in N^0_{i_1}$, and $N^0_{i_1}\cap N^0_{i_0}=N^0_i\cap N^0_{i_0}$. Then, since $\Psi$ is an isomorphism, $N^1_i$, $N^0_i$ and $N^0_{i_0}$ have the same height and $N^1_i \in N^1_{i_1}$ . To see that  $N^1_{i_1}\cap N^0_{i_0}=N^1_i\cap N^0_{i_0}$, note that any set in either $N^1_{i_1}\cap N^0_{i_0}$ or $N^1_i\cap N^0_{i_0}$ is in $R$, and use the fact that $\Psi$ is the identity on $R$ together with the fact that $N^0_{i_1}\cap N^0_{i_0}=N^0_i\cap N^0_{i_0}$.
\end{proof}

Our final amalgamation lemma is the following. This lemma will be used in the proof of Lemma \ref{refl}.

\begin{lemma}\label{iso4}
Let $\mtcl N_0$, $\mtcl N_1$ and $\mtcl N_2$ be partial $T$--symmetric systems of elementary substructures of $H(\t)$ such that $\mtcl N_0\sub\mtcl N_2$.
Suppose that $$(\bigcup\mtcl N_0)\cap(\bigcup\mtcl N_1)=R$$ and that, for some $m<\o$, there are enumerations $(N^0_i)_{i <m}$ and $(N^1_i)_{i <m}$ of $\mtcl N_0$ and $\mtcl N_1$, respectively, together with an isomorphism $\Psi$ between $\la \bigcup\mtcl N_0,\in, T, R, N^0_i\ra_{i<m}$ and $\la \bigcup\mtcl N_1,\in, T, R, N^1_i\ra_{i<m}$ which is the identity on $R$.  Suppose, in addition, that $$(\bigcup\mtcl N_2)\cap(\bigcup\mtcl N_1)=R$$ Then $\mtcl N_1\cup\mtcl N_2\cup\mtcl N$, where $$\mtcl N =\{\Psi_{N_0, N_1}(M)\,\mid\, M\in \mtcl N_2,\,M\in N_0, N_0\in\mtcl N_0, \,N_1\in\mtcl N_1,\,\d_{N_0}=\d_{N_1}\},$$ is a partial $T$--symmetric system.
\end{lemma}

\begin{proof}
Note that $\mtcl M=\mtcl N_0\cup\mtcl N_1$ is a partial symmetric system by Lemma \ref{iso3}. Note also that $\mtcl N_1\cup\mtcl N_2\cup\mtcl N$ can be written as $$\mtcl M_m\cup\{M\in\mtcl N_2\,\mid\, \textrm{ there is no }N\in \mtcl N_0\textrm{ such that }M\in N\}$$ for some finite $\sub$--increasing sequence $(\mtcl M_i)_{i\leq m}$ of partial symmetric systems such that $\mtcl M_0=\mtcl M$ and such that for all $i<m$ there is some $N\in\mtcl N_0$ such that $$\mtcl M_{i+1}=\mtcl M_i\cup \bigcup\{\Psi_{N_0, N_1}``(\mtcl N_2\cap N)  \mid N_0, N_1\in \mtcl M_i,\,\d_{N_0}=\d_{N_1}\leq \d_N\}$$ By Lemma \ref{iso2}, each $\mtcl M_i$ (for $i\leq m$) is a partial symmetric system. Also note that in fact $(\mtcl M_i)_{i\leq m}$ can be set up in such a way that there are $N^0,\ldots, N^{m-1}\in\mtcl N_0$ such that for every $i<m$, $\mtcl M_{i+1}$ is $$\mtcl M_i\cup\bigcup_{k\leq i}\{\Psi_{N_0, N_1}``(\mtcl N_2\cap N^k) \mid N_0\in\mtcl N_0, N_1\in \mtcl N_0\cup \mtcl N_1, \d_{N_0}=\d_{N_1}\leq \d_{N^k}\}$$ Indeed, start with any $N^0\in\mtcl N_0$ such that $N^0\cap(\mtcl N_2\setminus\mtcl N_0)\neq\emptyset$ and build the corresponding $\mtcl M_1$. Note that $\mtcl M_1\cap N\sub \mtcl N_2\cap N$ for every $N\in\mtcl N_0$ since condition (C) in Definition \ref{hom} holds for $\mtcl N_2\supseteq\mtcl N_0$ and since $\Psi_{N_0, N_1}$ is the identity on $R$ whenever $N_0\in\mtcl N_0$ and $N_1\in\mtcl N_1$ are such that $\d_{N_0}=\d_{N_1}$. Now we pick, if possible, any $N^1\in\mtcl N_0$ such that $N^1\cap(\mtcl N_2\setminus\mtcl M_1)\neq \emptyset$, and build the corresponding $\mtcl M_2$. We iterate this process until we get nothing new.
 Let us verify now that $\mtcl N_1\cup\mtcl N_2\cup\mtcl N$ satisfies conditions (A)--(D) in Definition \ref{hom}.

Condition (A) is obviously true. For condition (B), since it holds for $\mtcl N_2\supseteq\mtcl N_0$ it suffices to show, by the above representation of $(\mtcl M_i)_{i\leq m}$, that if $M\in\mtcl N_2$, $\ov M\in\mtcl N_2$ is such that $\d_{\ov M}=\d_M$, and $N_0\in\mtcl N_0$, $N_1\in\mtcl N_1$ are such that $\ov M\in N_0$ and $\d_{N_0} = \d_{N_1}$, then $\Psi:=\Psi_{N_0, N_1}\circ \Psi_{M, \ov M}$ fixes $M\cap\Psi_{N_0, N_1}(\ov M)$ (all other cases are easily proved using our hypothesis on $\mtcl N_0$ and $\mtcl N_1$ and the fact that $\mtcl N_2$ is a partial symmetric system extending  $\mtcl N_0$). Let $x\in M\cap\Psi_{N_0, N_1}(M')$ and note that $x\in M\cap R$. By our assumption on $\mtcl N_0$ and $\mtcl N_1$ there is some $N_1'\in\mtcl N_0$ such that $N_1'\cap R=N_1\cap R$. By condition (D) in Definition \ref{hom} for $\mtcl N_2$ there is some $M'\in\mtcl N_2\cap N_1'$ such that $\d_{M'}=\d_M$ and $M\cap N_1'=M\cap M'$. But then $\Psi(x)=(\Psi_{N_1', N_1}\circ\Psi_{M, M'})(x)=x$ since $x\in M\cap M'\cap N_1\cap N_1'$.

As to condition (C) it suffices to argue, again thanks to the above representation of $(\mtcl M_i)_{i\leq m}$,  that if $M\in\mtcl N_2$, $\ov M\in\mtcl N_2$ is such that $\d_{\ov M}=\d_M$, $N_0\in\mtcl N_0$, $N_1\in\mtcl N_1$ are such that $\ov M\in N_0$ and $\d_{N_0} = \d_{N_1}$, and $Q\in M\cap \mtcl N_2$, then $\Psi(Q)\in \mtcl N_1\cup\mtcl N_2\cup\mtcl N$ for $\Psi= \Psi_{N_0, N_1}\circ \Psi_{M, \ov M}$ (again, all remaining cases can be proved easily using our hypotheses). But $\Psi_{M, \ov M}(Q)\in\mtcl N_2$ by (C) for $\mtcl N_2$, and therefore $\Psi_{N_0, N_1}(\Psi_{M, \ov M}(Q))\in\mtcl N$.

Finally we verify condition (D). Suppose $M$ and $Q$ are both in $\mtcl N_2$ and $\d_M<\d_Q$. Suppose first that there are $N_0\in\mtcl N_0$, $N_1\in\mtcl N_1$ of the same height such that $M\in N_0$ and let us show that there is some $\tld M\in Q\cap\mtcl N_2$ of the same height as $M$ such that $\Psi_{N_0, N_1}(M)\cap Q=\tld M\cap \Psi_{N_0, N_1}(M)$. For this, note that by our hypothesis on $\mtcl N_0$ and $\mtcl N_1$ there is some $N_1'\in\mtcl N_0$ such that $N_1'\cap R = N_1\cap R$. By (D) for $\mtcl N_2\supseteq \mtcl N_0$ there is some $\tld M\in\mtcl N_2\cap Q$ of the same height as $M$ and such that $\Psi_{N_0, N_1'}(M)\cap Q=\Psi_{N_0, N_1'}(M)\cap \tld M$. But now it is easy to check that $\Psi_{N_0, N_1}(M)\cap Q =\Psi_{N_0, N_1}(M)\cap\tld M$ using the fact that if $x\in \Psi_{N_0, N_1}(M)\cap Q$, then $x\in R$ (which is true since $(\bigcup \mtcl N_2)\cap(\bigcup\mtcl N_1)=R$).  Finally suppose there are $N_0\in\mtcl N_0$, $N_1\in\mtcl N_1$ of the same height such that $Q\in N_0$ and let us show that there is some $M'\in \Psi_{N_0, N_1}(Q)\cap\mtcl N$ such that $M\cap\Psi_{N_0, N_1}(Q)=M\cap M'$: By (D) for $\mtcl N_2$ there is some $\tld M\in\mtcl N_2\cap Q$ of the same height as $M$ and such that $M\cap Q = M\cap \tld M$. Then it is easy to see that $M'=\Psi_{N_0, N_1}(\tld M)\in\mtcl N\cap \Psi_{N_0, N_1}(Q)$ is as desired, again using the fact that $(\bigcup \mtcl N_2)\cap(\bigcup\mtcl N_1)=R$. The remaining cases in the verification of (D) are easier as usual.
\end{proof}

\subsection{The forcing construction}\label{the_forcing_construction}

We start our forcing preparations fixing $\vec X=\la X_\a\,:\,\a\in \k,\,\cf(\a)\geq\o_1\ra$, a $\diamondsuit(\{\a\in \k,\,\cf(\a)\geq\o_1\})$--sequence. Note that the existence of such a diamond sequence implies $2^{<\k} = \k$. Let $\Phi$ be a bijection between $\k$ and $H(\k)$. Let also $\lhd$ be a well--order of $H(\k^+)$ in order type $2^\k$. $\Phi$ exists since $\av H(\k)\av=\k$ by  $2^{<\k}=\k$, and $\lhd$ exists since $\av H(\k^+)\av = 2^\k$.

We will define an iteration $\la\mtcl P_\a\,:\,\a\leq\k\ra$, together with a function $\Upsilon :\k\into H(\k)$ such that each $\Upsilon(\a)$ is a $\mtcl P_\a$--name in $H(\k)$.

Let $\la \t_\a\,:\,\a\leq \k\ra$ be the strictly increasing sequence of regular cardinals defined as $\t_0 =\av 2^{\k}\av^{+}$ and $\t_\a=|2^{\textsf{sup}\{\t_\beta\,:\,\b < \a \}}|^{+}$ if $\a>0$. For each $\a\leq\k$ let $\mathfrak N^\ast_\a$ be the collection of all countable elementary substructures of $H(\t_\a)$ containing $\Phi$, $\lhd$ and $\la \t_\b\,:\,\b<\a\ra$.
Let also $\mathfrak N_\a=\{N^\ast\cap H(\k)\,:\,N^\ast\in\mathfrak N^\ast_\a\}$ and note that if $\alpha < \beta$, then $\mathfrak N^\ast_\a$ belongs to all members of $\mathfrak N^\ast_\b$ containing the ordinal $\alpha$.

The properness of each $\mtcl P_\a$ will be witnessed by the club $\mathfrak N^\ast_\a$ (see Lemma \ref{horribilis}).  The proof of this will be by induction on $\a$.

Let us proceed to the definition of $\la\mtcl P_\a\,:\,\a\leq \k\ra$ now. Conditions in $\mtcl P_0$ are pairs of the form $(\emptyset, \D)$, where

\begin{itemize}

\it[$(A)$] $\D$ is a finite set of  ordered pairs of the form $(N, 0)$ such that $\dom(\D)$ is a partial $\Phi$--symmetric system.

\end{itemize}

Given $\mtcl P_0$--conditions $q_\e=(\emptyset, \D_\e)$ for $\e\in\{0, 1\}$, $q_1$ extends $q_0$ if and only if

\begin{itemize}

\it[$(B)$] $\dom(\D_0)\sub \dom(\D_1)$

\end{itemize}

\begin{notation} If $q$ is an ordered pair $(F, \D)$ such that $F$ is a function and $\D$ is a relation, we denote $F$ by $F_q$ and $\D$ by $\D_q$. In this context, we will use $\supp(q)$ to denote the domain of $F_q$ ($\supp(q)$ stands for the \emph{support} of $q$). Also, if $q$ is as above and $\x$ is an ordinal, \emph{the restriction of $q$ to $\x$}, denoted by $q\av_\x$, is defined
as the pair $$q\av_\x:=(F_q\restr\x,\,\{(N, \min\{\b,\,\x\})\,:\,(N, \b)\in \D_q\})$$
\end{notation}

Let $\a\leq \k$, $\a>0$, and suppose that we have defined $\mtcl P_\x$ for all $\x<\a$. Suppose also
that if $\x<\a$, then $\mtcl P_\x\sub H(\k)$, and if $q \in \mtcl P_\x$, then $q$ is an ordered pair of the form $(F_q, \Delta_q)$, where

\begin{itemize}

\it $F_q$ is a finite function with domain included in $\x$, and

\it $\Delta_q$  is a finite relation $\{(N_i, \g_i) \mid i \in n \}$ such that $\dom(\D_q)=\{N_i\mid i<n\}$ is a partial $\Phi$--symmetric system and, for all $i$, $\g_i$ is an ordinal such that $\g_i\leq\x$.
\end{itemize}

It will be convenient to consider the following technical variant of the $\al_{1.5}$--c.c.\ in the context of our construction. In this definition, and throughout the paper, if $\mtbb P\sub H(\k)$ is a poset, $N\sub H(\k)$, and $G\sub\mtbb P$ is a filter, then by $N[G]$ we mean $\{\tau_G\,:\,\tau\in N,\,\tau\textrm{ a }\mtbb P\textrm{--name}\}$. $\mtbb P$ might certainly fail to be a member of $N$ and might not even be definable there.

\begin{definition} (In $\V[G]$ for a $\mtcl P_\x$--generic filter $G$, $\x < \a$) A poset $\mtcl Q\sub H(\k)^{\V}$ is \emph{$\aleph_{1.5}$--c.c.\ relative to $\V$ and $G$} if and only if there is a club $D$ of $[H(\k)^{\V}]^{\al_0}$, $D$ belonging to $\V$, with the following property:

If \begin{enumerate} \it $p\in \mtcl Q$, \it $\{N_i\,:\, i \in n\} \subseteq D$ is finite, \it $p\in N_j[G]$ for some $j$ such that $\d_{N_j}=\min\{\d_{N_i}\,:\,i<n\}$, and \it $\{(N_i, \x)\,:\,i<m\}\sub\D_u$ for some $u\in G_\x$, \end{enumerate} then there is an extension of $p$ which is $(N_i[G],\, \mtcl Q)$--generic for all $i$.

\end{definition}

If $\a=\a_0+1$, then let $\Upsilon(\a_0)$ be a $\mtcl P_{\a_0}$--name in $H(\k)$ for an $\aleph_{1.5}$--c.c.\ poset relative to $\V$ and $\dot G_{\a_0}$
such that $\Upsilon(\a_0)$ is (say) the canonical $\mtcl P_{\a_0}$--name for trivial forcing on $\{0\}$ unless $X_{\a_0}$ is defined and codes (via some fixed reasonable translating function $\p$)\footnote{$\p$ can be taken to be for example the following surjection $\p:\mtcl P(\k)\into H(\k^+)$: if $a\in H(\k^+)$, then $\p(X) = a$ if and only if $X\sub\k$ codes a structure $(\k', E)$ isomorphic to $(\TC(\{a\}), \in)$ (for some unique cardinal $\k'\leq\k$). We will denote this isomorphism by $\p_X$.}  a $\mtcl P_{\a_0}$--name $\dot X$. In that case, $\Upsilon(\a_0)$ is a $\mtcl P_{\a_0}$--name in $H(\k)$ for an $\aleph_{1.5}$--c.c.\ poset relative to $\V$ and $\dot G_{\a_0}$
such that $\Upsilon(\a_0)$ is $\dot X$ if $\dot X$ is such a poset and such that $\Upsilon(\a_0)$ is trivial forcing on $\{0\}$ if $\dot X$ is not such a poset.

Let $0< \alpha \leq \kappa$. The definition of $\mtcl P_\a$ is as follows (regardless of whether $\alpha$ is a successor or a limit ordinal). Conditions in $\mtcl P_\a$ are pairs of the form $q= (F_q, \D_q)$ with the following properties.

\begin{itemize}

\it[$(C0)$] $F_q$ is a finite function whose domain is included in $\a$.

\it[$(C1)$] $\D_q$ is a finite set of pairs $(N, \g)$ with $\g\leq \min\{\a, \textsf{sup}(N\cap \kappa)\}$.

\it[$(C2)$] For every $\x<\a$, the restriction $q\av_\x$ of $q$ to $\x$ is a condition in $\mtcl P_\x$.

\it[$(C3)$] If $\x \in \dom(F_q)$, then $F_q(\x) \in H(\k)$ and $q\av_\x\Vdash_{\mtcl P_\x} \check{F}_q(\x)\in\Upsilon(\x)$.\footnote{As a convenient simplification, we will tend to omit inverted circumflexes (i.e., we will tend to write $x$ rather than $\check x$)
in cases where we clearly deal with canonical names.}

\it[$(C4)$] If $\x\in \dom(F_q)$, $N \in \mathfrak N_{\x+ 1}$, and $(N,\b) \in  \D_q$ for some $\beta \geq \x+1$, then $q\av_\x$ forces in $\mtcl P_\x$ that $F_q(\x)$ is $(N[\dot G_\x], \Upsilon(\x))$--generic.
\end{itemize}

Given conditions $q_\e=(F_\e, \D_e)$ (for $\e\in\{0, 1\}$) in $\mtcl P_\a$, $q_1$ extends $q_0$ if and only if the following holds:

\begin{itemize}

\it[$(D1)$] $q_1\av_\x \leq_\a q_0\av_\x$ for all $\x<\a$.

\it[$(D2)$] $\dom(F_0)\subseteq \dom(F_1)$ and if $\x \in \dom(F_0)$, then $q_1\av_{\x}$ forces in $\mtcl P_{\x}$ that $F_1(\x)$ $\Upsilon(\x)$--extends $F_0(\x)$.

\it[$(D3)$] If $(N, \b) \in \D_0$, then there exists $\widetilde{\b}\geq \b$ such that $(N, \widetilde{\b}) \in \D_1$.

\end{itemize}

We define also the notion of being an $\aleph_{1.5}$--c.c.\ poset relative to $\V$ and $G$ for any $\mtcl P_\k$--generic filter $G$ in the natural way: A poset $\mtcl Q\sub H(\k)^{\V}$ is \emph{$\aleph_{1.5}$--c.c.\ relative to $\V$ and $G$} if and only if there is a club $D$ of $[H(\k)^{\V}]^{\al_0}$, $D$ belonging to $\V$, with the following property:

If \begin{enumerate} \it $p\in \mtcl Q$, \it $\{N_i\,:\, i \in n\} \subseteq D$ is finite, \it $p\in N_j[G]$ for some $j$ such that $\d_{N_j}=\min\{\d_{N_i}\,:\,i<n\}$, and \it $\{(N_i, \textsf{sup}(N_i\cap \k))\,:\,i<m\}\sub\D_u$ for some $u\in G_\x$, \end{enumerate} then there is an extension of $p$ which is $(N_i[G],\, \mtcl Q)$--generic for all $i$.

\subsection{The Main Facts}\label{the_main_facts}

We are going to prove the relevant properties of the forcings $\mtcl P_\a$. Theorem \ref{mainthm} will follow immediately from them.

Our first lemma is immediate from the definitions.

\begin{lemma}\label{suborder} $\mtcl P_\k=\bigcup_{\b<\k}\mtcl P_\b$, and $\emptyset\neq\mtcl P_\a\sub \mtcl P_\b$ for all $\a\leq\b\leq\k$.\end{lemma}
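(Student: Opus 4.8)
The plan is to unwind the definitions of the posets $\mtcl P_\b$ and check three things: that each $\mtcl P_\a$ is nonempty, that $\mtcl P_\a\sub\mtcl P_\b$ whenever $\a\leq\b\leq\k$, and that $\mtcl P_\k=\bigcup_{\b<\k}\mtcl P_\b$. For nonemptiness I would simply observe that the pair $(\emptyset,\emptyset)$ satisfies clause $(A)$ when $\b=0$ (the empty set being vacuously a symmetric system of members of $\mtcl M_0$) and clauses $(C0)$--$(C4)$ when $0<\b\leq\k$ (clauses $(C3)$, $(C4)$ being vacuous), so $(\emptyset,\emptyset)\in\mtcl P_\a$ for every $\a\leq\k$.

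For the inclusions $\mtcl P_\a\sub\mtcl P_\b$ I would argue by induction on $\b$. The key preliminary observation is that if $q=(p,\D)\in\mtcl P_\a$, then $dom(p)\sub\a$ and every pair $(N,\g)\in\D$ has $\g\leq\a$, so the restriction $q\av_\g$ is literally $q$ for every $\g$ with $\a\leq\g\leq\b$. Granting this, membership of $q$ in $\mtcl P_\b$ is verified clause by clause: $(C0)$ is immediate from $dom(p)\sub\a\sub\b$; $(C1)$ holds since $\g\leq\a\cap sup(N\cap\k)\leq\b\cap sup(N\cap\k)$; $(C3)$ and $(C4)$ are unchanged because they only involve coordinates $\a'\in dom(p)\sub\a$ and the restrictions $q\av_{\a'}$, $q\av_{\a'+1}$ with $\a'+1\leq\a$, which do not depend on whether one computes inside $\mtcl P_\a$ or $\mtcl P_\b$; and for $(C2)$ one needs $q\av_\g\in\mtcl P_\g$ for all $\g<\b$, which for $\g<\a$ is clause $(C2)$ applied to $q\in\mtcl P_\a$, for $\g=\a$ is just $q\av_\a=q\in\mtcl P_\a$, and for $\a<\g<\b$ is $q\av_\g=q\in\mtcl P_\a\sub\mtcl P_\g$ by the induction hypothesis. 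The same comparison of restrictions shows that the ordering of $\mtcl P_\b$ restricted to $\mtcl P_\a$ coincides with the ordering of $\mtcl P_\a$.

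Finally, for $\mtcl P_\k=\bigcup_{\b<\k}\mtcl P_\b$: the right-to-left inclusion is the previous point with $\b$ ranging over ordinals below $\k$. For the left-to-right inclusion, take $q=(p,\D)\in\mtcl P_\k$; since $p$ and $\D$ are finite and each structure $N$ occurring in $\D$ is countable (so $sup(N\cap\k)<\k$ as $cf(\k)>\o$), there is, $\k$ being a limit cardinal, some $\b<\k$ with $dom(p)\sub\b$ and $\g<\b$ for every $(N,\g)\in\D$. Then $q\av_\b=q$, and clause $(C2)$ applied to $q\in\mtcl P_\k$ at the index $\b<\k$ gives $q\av_\b\in\mtcl P_\b$, i.e.\ $q\in\mtcl P_\b$. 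I do not anticipate a genuine obstacle; the only point that is slightly more than a one-line verification is the appeal to the induction hypothesis inside clause $(C2)$, together with the bookkeeping that the restrictions $q\av_\g$ remain unchanged once $\g$ exceeds all coordinates occurring in $q$.
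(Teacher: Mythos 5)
Your proof is correct, and it is precisely the clause-by-clause unwinding of $(C0)$--$(C4)$ (together with the key observation that $q\av_\g=q$ once $\g$ exceeds all coordinates in $supp(q)$ and all markers in $\D_q$, plus boundedness of these finitely many ordinals below the regular $\k$) that the paper has in mind when it declares the lemma immediate from the definitions. There is no gap and no divergence in approach.
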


An inductive verification shows that if $q$ is a condition in $\mtcl P_\alpha$ ($\alpha \leq \kappa$), $(N, \varrho)\in\Delta_q$, and $\rho$ is an ordinal below $\varrho$, then $(F_q, \Delta_q  \cup \{(N, \rho)\})$ is also a condition in $\mtcl P_\alpha$. In fact, the original condition $q$ is clearly equivalent to this condition. Also note that if $r$ is a condition in $\mtcl P_\beta$ and $\alpha \leq \beta \leq \kappa$, then the relations $\Delta_r$ and $\Delta_{r\av_\a}$ have the same domain. In particular, if $q\in\mtcl P_\alpha$, $\{q^i \,:\, i \in I \}$ is a finite subset of $\mtcl P_\beta$, and $q \leq_\alpha q^i \av_\a$ for all $i \in I$, then $\dom(\Delta_{q^i})= \dom (\Delta_{q^i \av_\a}) \subseteq \dom(\Delta_{q}) = \dom(\Delta_{q \av_0})$ (the inclusion follows from clauses $(B)$ and $(D3)$ in the definition of our forcing). To sum up,  $\Delta_q \cup \bigcup\{\Delta_{q^i\av_\a}\,:\, i \in I \}$ and $\Delta_q$ have he same domain, and this domain is of course a partial $\Phi$--symmetric system. Again by $(D3)$, if $q \leq_\alpha q^i \av_\a$ and $(N, \rho)$ is in $\Delta_{q^i\av_\a}$, then there is $\varrho \geq \rho$ such that $(N, \varrho)$ is in $\Delta_q$. So, after repeating a finite number of times the first observation of this paragraph we have the following result:

\begin{lemma}\label{preorder}
Let $\alpha$ and $\beta$ be ordinals such that $\alpha \leq \beta \leq \kappa$. If $q$ is in $P_\alpha$, $\{q^i \,:\, i \in I \}$ is a finite subset of $\mtcl P_\beta$ and $q \leq_\alpha q^i \av_\a$ for all $i \in I$, then $q$ and $(F_q, \Delta_q \cup \bigcup\{\Delta_{q^i\av_\a}\,:\, i \in I \})$ are two equivalent conditions in the forcing $\mtcl P_\alpha$.
\end{lemma}

Lemma \ref{compll} shows that $\la\mtcl P_\a\,:\,\a\leq\k\ra$ is a forcing iteration in a broad sense.

\begin{lemma}\label{compll}
If $q=(F_q, \D_{q})\in \mtcl P_\a$, $r=(F_r, \D_{r}) \in \mtcl P_\b$, and $q \leq_\a
r|_\a$, then $$r \wedge_\a q:=(F_q\cup(F_r\restr [\a,\, \b)), \D_{q}\cup \D_{r})$$ is a condition in $\mtcl P_\b$ extending $r$. In particular, every maximal antichain in $\mtcl P_\a$ is a maximal antichain in $\mtcl P_\b$, and therefore $\mtcl P_\a$ is a complete suborder of $\mtcl P_\b$.
\end{lemma}

\begin{proof}

The proof proceeds by induction on $\b\geq\a$. The case $\beta = \alpha$ follows from Lemma \ref{preorder}, since it implies that $r \wedge_\a q$ is equivalent to $q$. Let us assume now that $\beta=\sigma+1$ with $\sigma \geq \alpha$. Clearly, $r \wedge_\a q$ satisfies clauses $(C0)$ and $(C1)$ in the definition of $\mtcl P_{\s+1}$. By the induction hypothesis we know that the restriction of $r \wedge_\a q$ to $\sigma$, that is, $$(r\wedge_\a q)|_\s= (F_q\cup(F_r\restr [\a,\, \s)), \D_{q}\cup \D_{r\av_\s})$$ is a condition in $\mtcl P_\s$ extending $r\av_\s$. Therefore, $r \wedge_\a q$ also satisfies $(C2)$. If $\s\notin dom(F_r)$, then $r \wedge_\a q$ is a condition in $\mtcl P_{\s+1}$, since clause $(C4)$ is automatically satisfied. If $\s\in dom(F_r)$, then $(r\wedge_\a q)\av_\s$ forces in $\mtcl P_\s$ that $F_r(\s)$ is in $\Upsilon(\s)$ (since $r\av_\s$ forces this and $(r\wedge_\a q)\av_\s$ extends $r\av_\s$). This concludes the verification of $(C3)$ for $q\wedge_\a r$. Now we check that $(r \wedge_\a q)\av_\s$ forces that $F_r(\s)$ is $(N[\dot G_\s], \Upsilon(\s))$--generic for all those $N$ in $\mathfrak N_{\s+ 1}$ such that $(N, \sigma+1) \in \D_{q}\cup \D_{r}$. But such an $N$ is such that $(N, \sigma+1) \in \D_{r}$ (by definition of $\mtcl P_\a$,  $\range(\D_q)\subseteq \alpha+1$). Since $r$ satisfies $(C4)$, $r\av_\s$ (and hence, the restriction of $r \wedge_\a q$ to $\sigma$) forces that $F_r(\s)$ is $(N[\dot G_\s], \Upsilon(\s))$--generic. Finally note that the induction hypothesis
and the inclusion $\D_r \subseteq \D_{r \wedge_\a q}$ together imply that $r \wedge_\a q$ extends $r$. The case when $\beta$ is a nonzero limit ordinal follows directly from the induction hypothesis.

\end{proof}

The following four amalgamation results are, essentially, Lemmas 3.5--3.8 from \cite{ASP}. The proofs of those lemmas are easy mechanical verifications, not particularly illuminating, and translate almost word by word to proofs of the present lemmas, so the familiarized reader may skip the present proofs.

\begin{lemma}\label{amalg1}
Let $\a<\k$ and let $q_0=(F_0, \, \D_0)$ and $q_1=(F_1, \, \D_1)$ be conditions in $\mtcl P_{\a+1}$ such that there is $v \in H(\kappa)$, a condition $r=(F_r, \D_r)$ in $\mtcl P_{\a}$, and a finite set $\{M_j : j \in  n\}$ with the following properties:
\begin{itemize}
\it[(a)] $\alpha+1 \leq \textsf{sup}(M_j \cap \k)$ and $(M_j,\a) \in \D_r$ for all $j < n$,
\it[(b)] $r$ extends both $q_0\av_\a$ and $q_1\av_\a$,
\it[(c)] $\a\in \dom(F_0)\cap \dom(F_1)$ and $r$ forces in $\mtcl P_\a$ that $v$ extends both $F_0(\a)$ and $F_1(\a)$ in $\Upsilon(\alpha)$, and
\it[(d)] $r$ forces in $\mtcl P_\a$ that $v$ is $(M_j[G_\a], \Upsilon(\a))$--generic for all $j < n$ such that $M_j \in \mathfrak N_{\a+1}$.

\end{itemize}
Then, $$q_2=(F_r\cup\{\la \a, v \ra\}, \, \D_{r} \cup \D_{0} \cup \D_{1} \cup \{(M_j, \a+1) : j \in  n\})$$ is a condition in $\mtcl P_{\a+1}$ extending both $q_0$ and $q_1$.
\end{lemma}

\begin{proof}
First we check that $q_2$ is in $\mtcl P_{\a+1}$. It follows from $(a)$ and $(b)$ together with Lemma \ref{preorder} that the restriction of $q_2$ to $\a$ is equivalent to $r$, and hence that $q_2$ satisfies clauses $(C0)$--$(C2)$. Condition $(C3)$ for $q_2$ follows from (c).
Finally we must show that $r$ forces that $v$ is $(N[\dot G_\a], \Upsilon(\a))$--generic for all those $N$ in $\mathfrak N_{\a+ 1}$ such that $(N, \a+1)$ is in $\D_{0}\cup \D_{1} \cup \{(M_j, \a+1) : j \in  n\}$ (recall that if $(N,\g) \in \D_r$, then $\gamma \leq \alpha$). But for such an $N$, if $(N, \a+1) \in \D_{i}$ ($i \in \{0, 1\}$), it suffices to recall that $r \leq_\a q_i\av_\a$, that $r$ forces that $v$ extends both $F_0(\a)$ and $F_1(\a)$, and that (by clause $(C4)$ applied to $q_i$) $q_i\av_\a$ forces that $F_i(\a)$ is $(N[\dot G_\a], \Upsilon(\a))$--generic. Hence, $r$ forces that $v$ is $(N[\dot G_\a], \Upsilon(\a))$--generic. The case when $(N, \a+1) \in \{(M_j, \a+1) : j \in  n\}$ follows from $(d)$. Finally note that $(b)$, $(c)$ and the inclusion $\D_i \subseteq \D_{0} \cup \D_{1} \cup \{(M_j, \a+1) : j \in  n\}$ imply together that $q_2$ extends $q_i$ for $i\in\{0, 1\}$.
\end{proof}

Exactly the same proof establishes the following variant of Lemma \ref{amalg1}.

\begin{lemma}\label{amalg1.5}
Let $\a<\k$ and let $q_0=(F_0, \, \D_0)$ and $q_1=(F_1, \, \D_1)$ be conditions in $\mtcl P_{\a+1}$, $r=(F_r, \D_r)$ a condition in $\mtcl P_{\a}$, and $\{M_j : j \in  n\}$ a finite set with the following properties:
\begin{itemize}
\it[(a)] $\alpha+1 \leq \textsf{sup}(M_j \cap \k)$ and $(M_j,\a) \in \D_r$ for all $j < n$,
\it[(b)] $r$ extends both $q_0\av_\a$ and $q_1\av_\a$, and
\it[(c)] $\a\notin \dom(F_0)\cup \dom(F_1)$.
\end{itemize}
Then, $$q_2=(F_r, \, \D_{r} \cup \D_{0} \cup \D_{1} \cup \{(M_j, \a+1) : j \in  n\})$$ is a condition in $\mtcl P_{\a+1}$ extending both $q_0$ and $q_1$.

Suppose, in addition, that $v \in H(\kappa)$ is such that

\begin{itemize}

\it[(d)] $r$ forces in $\mtcl P_\a$ that $v$ is $(M_j[G_\a], \Upsilon(\a))$--generic for all $j < n$ such that $M_j \in \mathfrak N_{\a+1}$, and

\it[(e)] $r$ forces in $\mtcl P_\a$ that $v$ is $(N[G_\a], \Upsilon(\a))$--generic for all $N$ such that $(N, \a+1)\in \D_0\cup\D_1$ and $N \in \mathfrak N_{\a+1}$.

\end{itemize}

Then, $$q'_2=(F_r\cup\{\la \a, v\ra\}, \, \D_{r} \cup \D_{0} \cup \D_{1} \cup \{(M_j, \a+1) : j \in  n\})$$ is a condition in $\mtcl P_{\a+1}$ extending both $q_0$ and $q_1$.
\end{lemma}

\begin{lemma}\label{amalg2}
Assume that $0\leq \sigma < \alpha \leq \kappa$. Let $q_\x=(F_\x,\D_\x)$ $(\x \in \{0, 1\})$ be  conditions in $\mtcl P_\a$ such that $\supp(q_0)\cup \supp(q_1) \subseteq \sigma$ and such that there exists a condition $r =(F_r, \D_r) \in \mtcl P_{\s}$ extending both $q_0\av_\s$ and $q_1\av_\s$. Then $q_0$ and $q_1$ are compatible in $\mtcl P_\a$.
\end{lemma}

\begin{proof}

Define $q_2=(F_r, \D_r\cup \D_{0} \cup \D_{1})$. We prove by induction on $\b$, $\s\leq \b\leq \a$, that $q_2\av_\b$ is a condition in $\mtcl P_\b$ extending $q_0\av_\b$ and $q_1\av_\b$. The successor step follows from Lemma \ref{amalg1.5}.
\end{proof}

\begin{lemma}\label{amalg3}
Given $\b\leq\k$ and given conditions $q_\x=(F_\x,\D_\x)$ ( for $\x \in \{ 0,1 \}$) in $\mtcl P_\b$, let $Z_{\x} = \supp(q_\x)\cup(\b \cap \bigcup \dom(\D_{q_\x}))$. Let $\a\leq \b$ be such that $Z_{0} \cap Z_{1} \subseteq \a$, and assume there is a condition $r=(F_r, \D_r)$ in $\mtcl P_\a$ extending $q_0 \av_\a$ and $q_1 \av_\a$. Let $F_r^{0,1} = F_r\cup (F_0\restr[\a,\,\b))\cup (F_1\restr[\a,\,\b))$.

Then the natural amalgamation of $r$, $q_0$ and $q_1$, i.e.,
$$(q_0 \wedge q_1) \wedge_\a r:= (F_r^{0,1}, \D_r \cup \D_0\cup \D_1)$$ is a $\mtcl P_\b$--condition extending $q_0 $ and $q_1$.
\end{lemma}

\begin{proof}
The proof is by induction on $\beta\geq \alpha$. The case $\beta = \alpha$ follows from Lemma \ref{preorder}, since this lemma implies that  $(q_0 \wedge q_1) \wedge_\a r$ is equivalent to $r$. We consider now the case when $\beta=\sigma+1$ with $\sigma \geq \alpha$. Clearly, $(q_0 \wedge q_1) \wedge_\a r$ satisfies clauses $(C0)$ and $(C1)$. Using the induction hypothesis we know that the restriction of the amalgamation to $\sigma$ is a condition in
$\mtcl P_\s$ which extends both $q_0\av_\s$ and $q_1\av_\s$. In particular, if $\s\in dom(F^{0, 1}_r)$, then $((q_0 \wedge q_1) \wedge_\a r)\av_\s$ forces that $F^{0, 1}_r(\s)$ is in $\Upsilon(\s)$. Therefore, $ (q_0 \wedge q_1) \wedge_\a r$ also satisfies $(C2)$ and $(C3)$.
Let us assume that $\sigma \in dom(F^{0, 1}_r)$. In fact, since $\supp(q_0) \cap \supp(q_1) \subseteq \alpha$ and $\sigma \geq \alpha$, we may assume that $\sigma \in supp(q_0) \setminus supp(q_1)$ (the proof when $\sigma \in supp(q_1)$ is identical). We must show that the condition $((q_0 \wedge q_1) \wedge_\a r)\av_\s$ forces that $F_r^{0,1}(\s)$ is $(N[\dot G_\s], \Upsilon(\s))$--generic for all those $N$ in $\mathfrak N_{\s+ 1}$ such that $(N, \sigma+1) \in \D_{0}\cup \D_{1}$. Since $\s \geq \alpha$ and $Z_{0} \cap Z_{1} \subseteq \a$, it follows for such an $N$ that $(N, \sigma+1) \in \D_0$. Since $q_0$ satisfies $(C4)$ and  $F_r^{0,1}(\sigma)= F_0(\sigma)$, $q_0\av_\s$ (and hence, $((q_0 \wedge q_1) \wedge_\a r)\av_\s$) forces what we want.  Finally note that the induction hypothesis, the choice of
$F_r^{0,1}$ and the inclusion $\D_0 \cup \D_1 \subseteq \D_{(q_0 \wedge q_1) \wedge_\a r}$ together imply that $(q_0 \wedge q_1) \wedge_\a r$ extends $q_0$ and $q_1$. The case when $\beta$ is a nonzero limit ordinal follows directly from the induction hypothesis.
\end{proof}

The following lemma gives a representation of $\mtcl P_{\a+1}$ as a certain dense subset of an iteration of the form $\mtcl P_\a\ast\dot{\mtcl Q}_\a$.

\begin{lemma}\label{repr}
For all $\a<\k$, $\mtcl P_{\a+1}$ is isomorphic to a dense suborder of $\mtcl P_\a\ast\dot{\mtcl Q}_\a$, where $\dot{\mtcl Q}_\a$ is, in $\V^{\mtcl P_\a}$, the collection of all pairs $(W, Q)$ such that $W$ has cardinality at most $1$ and

\begin{itemize}

\it[$(\circ)$] if $v \in W$, then there is some $r = (F_r, \D_r)\in\dot G_\a$ such that $$(F_r\cup\{\la\a, v\ra\}, \D_r\cup \{(N, \a+1)\,:\, N\in Q\})\in\mtcl P_{\a+1},$$

\it[$(\circ)$] if $W=\emptyset$, then there is some $r = (F_r, \D_r)\in\dot G_\a$ such that $$(F_r, \D_r\cup \{(N, \a+1)\,:\, N\in Q\})\in\mtcl P_{\a+1},$$
\end{itemize}

\noindent ordered by $(W_1,  Q_1)\leq_{\dot{\mtcl Q}_\a}(W_0, Q_0)$ if and only if

\begin{itemize}

\it[(i)] $Q_0\sub Q_1$, and

\it[(ii)] if $W_0=\{v\}$, then $W_1=\{v'\}$ for some $v'\in\Upsilon(\a)$ which extends $v$ in $\Upsilon(\a)$.
\end{itemize}
\end{lemma}

\begin{proof}
Let $\wdtld{\mtcl P}_{\a+1}$ consist of all $(r, \check x)$, where $r\in\mtcl P_\a$ and $r\Vdash_{\mtcl P_\a}\check x\in\dot{\mtcl Q}_\a$. Then $\psi:\mtcl P_{\a+1}\into\wdtld{\mtcl P}_{\a+1}$ is an isomorphism, where $\psi(q)=(q\av_\a,\,\check x)$, $x=(\{F_q(\alpha)\}, \D_q^{-1}(\a+1))$ if $\a\in\supp(q)$, and $x=(\emptyset, \D_q^{-1}(\a+1))$ if $\a\notin\supp(q)$.
\end{proof}

\begin{lemma}\label{Qchain}
If $\alpha$ and $\dot{\mtcl Q}_\a$ are as in the statement of Lemma \ref{repr}, then $\dot{\mtcl Q}_\a$ is forced by $\mtcl P_\a$ to have the $\aleph_2$--chain condition.
\end{lemma}

\begin{proof}
Let $G$ be $\mtcl P_\alpha$--generic filter over $\V$ and let $\mtcl Q_\a$ be the interpretation of $\dot{\mtcl Q}_\a$ by $G$.  Working in $\V[G]$ suppose $\{(W_i, Q_i) \,:\,i < \omega_2\}\subseteq\mtcl Q_\a$ and  $W_i=\{v_i\}$ is nonempty for each $i$. Since $\Upsilon(\alpha)$ is forced by $\mtcl P_\a$ to have the $\aleph_2$--c.c., there are $v \in \Upsilon(\a)$, $i_0$, $i_1<\o_2$, and $r \in G$ such that $r$ forces that $v$ extends both $v_{i_0}$ and $v_{i_1}$. The proof will be finished in this case once we show $(F_r\cup\{\la\a, v\ra\}, \D_r\cup \{(N, \a+1)\,:\, N\in Q_{i_0} \cup Q_{i_1}\})$ is a condition in $\mtcl P_{\a+1}$. Note that, by definition of $\mtcl Q_\a$, there are $r_0$ and $r_1$ in $G$ such that $(F_{r_0}\cup\{\la\a, v_{i_0} \ra\}, \D_{r_0}\cup \{(N, \a+1)\,:\, N\in Q_{i_0}\})$ and $(F_{r_1}\cup\{\la\a, v_{i_1} \ra\}, \D_{r_1}\cup \{(N, \a+1)\,:\, N\in Q_{i_1}\})$ are conditions in $\mtcl P_{\a+1}$. Since $G$ is a filter and by extending $r$ if necessary, we can assume that $r$ extends both $r_0$ and $r_1$. Now we are done by a  simple application of Lemma \ref{amalg1}. 

The proof for the case when $\{(W_i, Q_i) \,:\,i < \omega_2\}\subseteq\mtcl Q_\a$ is such that $W_i=\emptyset$ for cofinally many  $i$ in $\omega_2$ is very similar (the only difference is that we use the first part of Lemma \ref{amalg1.5} instead of Lemma \ref{amalg1}).
\end{proof}

The next step in the proof of Theorem \ref{mainthm} will be to show that all $\mtcl P_\a$ (for $\a\leq \k$) have the $\al_2$--chain condition. We first prove that $\mtcl P_0$ is $\al_2$--Knaster.\footnote{Given a cardinal $\m$, a partial order $\mtbb P$ is $\m$--Knaster if for every $X\in[\mtbb P]^\m$ there is $Y\in [X]^\m$ consisting of pairwise compatible conditions.}

\begin{lemma}\label{knaster} $\mtcl P_0$ is $\al_2$--Knaster.
\end{lemma}

\begin{proof}
Suppose $m<\o$ and $q_\x=\{N^\x_i\,:\,i<m\}$ is a $\mtcl P_0$--condition for each $\x<\o_2$. For notational convenience we are identifying a $\mtcl P_0$--condition $q$ with $\dom(\D_q)$, which is fine for this proof. By $\textsf{CH}$ we may assume that $\{\bigcup_{i<m}N^\x_i\,:\,\x<\o_2\}$ forms a $\D$--system with root $X$. Furthermore, by $\textsf{CH}$ we may assume, for all $\x$, $\x'<\o_2$, that the structures
$\la \bigcup_{i<m}N^\x_i,\in, \Phi,  X, N^\x_i\ra_{i<m}$ and $\la \bigcup_{i<m}N^{\x'}_i,\in, \Phi,  X, N^{\x'}_{i}\ra_{i<m}$  are isomorphic and that the corresponding isomorphism fixes $X$. The first assertion follows from the fact that there are only $\al_1$--many isomorphism types for such structures. For the second assertion note that, if $\Psi$ is  the unique isomorphism between $\la \bigcup_{i<m}N^\x_i,\in, \Phi,  X, N^\x_i\ra_{i<m}$ and $\la \bigcup_{i<m}N^{\x'}_i,\in,  \Phi, X, N^{\x'}_{i}\ra_{i<m}$, then the restriction of $\Psi$ to $X\cap\k$ has to be the identity on $X\cap\k$. Since  there is  a bijection $\Phi:H(\k)\into \k$ definable in $(H(\k), \in, \Phi)$, we have that $\Psi$ fixes $X$ if and only if it fixes $X\cap\k$. It follows that $\Psi$ fixes $X$.
Hence, by Lemma \ref{iso3} we have, for all $\x$, $\x'<\o_2$, that $q_\x\cup q_{\x'}$ extends both $q_\x$ and $q_{\x'}$.
\end{proof}

\begin{lemma}\label{cc}
For every $\a\leq \k$, $\mtcl P_\a$ has the $\al_2$--chain condition.
\end{lemma}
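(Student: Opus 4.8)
The plan is to prove the $\al_2$-chain condition by a standard $\Delta$-system style argument, adapted to the presence of side conditions consisting of symmetric systems of countable structures. Suppose towards a contradiction that $\{q_\xi : \xi < \omega_2\}$ is an antichain in $\mathcal P_\beta$, where $q_\xi = (p_\xi, \Delta_\xi)$. First I would fix, for each $\xi$, a countable elementary substructure $N^\ast_\xi \in \mathcal M^\ast_{\tau}$ (for a suitably large $\tau$ among the $\tau_\alpha$, or an auxiliary one) with $q_\xi \in N^\ast_\xi$, and pass to a stationary (indeed, of size $\al_2$) subset $S \subseteq \omega_2$ on which the relevant data is as uniform as possible: the finite sets $\mathrm{supp}(q_\xi)$ form a $\Delta$-system with root $\rho$, the conditions agree below $\sup(\rho)$ and on the "root part", the cardinality $|\Delta_\xi|$ is constant, the isomorphism types of the structures $(N^\ast_\xi; \in, q_\xi, \dots)$ are all the same, the ordinals $\delta_{N}$ for $N \in \mathrm{dom}(\Delta_\xi)$ line up correctly, and the "tails" (the part of $q_\xi$ living at or above $\sup(\rho)$, together with the new side conditions) have been pushed above $\sup(N^\ast_\eta \cap \kappa)$ for $\eta < \xi$ — the usual free-amalgamation setup. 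The point of working with $\al_2$ rather than $\al_1$ indices is that $\mathrm{CH}$ (more precisely $2^{\al_0} = \al_1 < \al_2$) gives only $\al_1$-many isomorphism types of countable structures, so a pigeonhole over $\omega_2$ still leaves a set of size $\al_2$, and $\al_2$ is regular.

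Next, I would take two indices $\xi < \eta$ from $S$ and argue that $q_\xi$ and $q_\eta$ are compatible, which contradicts the antichain assumption. The natural candidate for a common extension is obtained by amalgamating: put $p = p_\xi \cup p_\eta$ (well-defined since the supports agree on the root and are otherwise disjoint), and let $\Delta$ be $\Delta_\xi \cup \Delta_\eta$ together with the images of structures under the isomorphisms $\Psi_{N^\ast_\xi, N^\ast_\eta}$ and their compositions, closing off so that conditions $(B)$–$(D)$ in the definition of symmetric system hold — this is exactly the kind of closure that was built into the side-condition apparatus precisely so that finite symmetric systems can be merged. One then checks $(C0)$–$(C4)$ for the amalgam. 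Clauses $(C0)$–$(C2)$ are bookkeeping. For $(C3)$ and $(C4)$ one proceeds by induction on $\alpha < \beta$, using Lemma~\ref{compll} (so that $\mathcal P_\alpha$ sits completely inside $\mathcal P_\beta$ and restrictions behave well) and Lemma~\ref{repr} (the $\mathcal P_\alpha * \dot{\mathcal Q}_\alpha$ representation), together with the hypothesis that each $\Phi(\alpha)$ is forced to have the $\al_2$-chain condition: at a successor step $\alpha+1$, given that $q_\xi\av_\alpha$ and $q_\eta\av_\alpha$ have a common extension $r \in \mathcal P_\alpha$, one uses the $\al_2$-c.c. of $\Phi(\alpha)$ (read off from the images $p_\xi(\alpha)$, $p_\eta(\alpha)$, which the isomorphism $\Psi$ identifies modulo the generic) below $r$ to amalgamate the $\alpha$-th coordinates, while the genericity clause $(C4)$ is preserved because the new side conditions introduced above $\sup(\rho)$ carry markers that, by construction, do not constrain coordinates in the root — the mechanism flagged in the proof of Lemma~\ref{compll}. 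At limit $\alpha$ one takes unions, using finiteness of supports.

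The main obstacle I expect is verifying that the amalgamated side-condition set $\Delta$ really is (the domain of) a symmetric system, i.e.\ that closing $\Delta_\xi \cup \Delta_\eta$ under the requisite isomorphisms and their transitive closure terminates in a \emph{finite} set satisfying $(B)$–$(D)$, and that this can be done compatibly with the markers so that $(C1)$–$(C4)$ survive; this is where the symmetry axioms earn their keep, and where one must be careful that the isomorphisms $\Psi_{N_i,N_{i'}}$ being the identity on $\kappa \cap N_i \cap N_{i'}$ makes the various composed maps coherent. The second delicate point is threading the induction through clause $(C4)$: one must ensure that when a structure $N$ lands in $\Delta^{-1}(\alpha+1)$ of the amalgam, $p(\alpha)$ is still forced to be $(N[\dot G_\alpha], \Phi(\alpha))$-generic — this follows because either $N$ was already a side condition of one of $q_\xi, q_\eta$ at that coordinate, or $N$ is the $\Psi$-image of such a structure and $\Phi(\alpha)$ (being definable from $\mathcal P_\alpha$-data fixed by the relevant isomorphisms) is correspondingly symmetric, so genericity transfers. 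Once compatibility of $q_\xi$ and $q_\eta$ is established the proof concludes immediately, since it contradicts the assumption that $\{q_\xi : \xi < \omega_2\}$ was an antichain.
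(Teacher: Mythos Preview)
Your plan has a real gap at the amalgamation step over root coordinates. You write that ``the conditions agree below $\sup(\rho)$ and on the root part'' and that at a successor coordinate $\alpha$ one ``uses the $\aleph_2$--c.c.\ of $\Phi(\alpha)$ \dots\ to amalgamate the $\alpha$-th coordinates''. Neither of these works. The values $p_\xi(\alpha)$ live in $H(\kappa)$, which has size $\kappa\geq\aleph_3$, so $\textsc{CH}$ pigeonholing cannot force $p_\xi(\alpha)=p_\eta(\alpha)$ on the root; and the $\aleph_2$--c.c.\ of $\Phi(\alpha)$ is a statement about antichains, not a device for making two \emph{already chosen} conditions compatible. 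Passing to isomorphic enveloping models $N^\ast_\xi\cong N^\ast_\eta$ does not rescue this: the isomorphism sends $p_\xi(\alpha)$ to $p_\eta(\alpha)$ but gives no reason for these two (possibly distinct) elements of $\Phi(\alpha)_{\dot G_\alpha}$ to have a common extension. Your internal ``induction on $\alpha<\beta$'' to verify the amalgam is a condition therefore stalls precisely at the root coordinates.

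The paper sidesteps this by inducting on $\beta$ rather than trying a single direct argument. The successor case $\beta=\alpha+1$ is dispatched abstractly: by Lemma~\ref{repr} one has $\mathcal P_{\alpha+1}$ dense in $\mathcal P_\alpha\ast\dot{\mathcal Q}_\alpha$, and the $\aleph_2$--c.c.\ is preserved under two-step iteration, so there is nothing to amalgamate by hand. At a limit $\beta$, after the $\Delta$-system refinement one fixes $\alpha<\beta$ bounding the root $R$ and then \emph{applies the induction hypothesis to $\mathcal P_\alpha$}: among the $\omega_2$ restrictions $q_\xi\!\restriction_\alpha$ some two must be compatible, say with common extension $r$. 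Only after this choice does one amalgamate, and now it is trivial: above $\alpha$ the supports $supp(q_\xi)\setminus R$ and $supp(q_{\xi'})\setminus R$ are disjoint and (by the refinement) miss $\bigcup dom(\Delta_{q_{\xi'}})$, $\bigcup dom(\Delta_{q_\xi})$ respectively, so no new $(C4)$ constraints arise and the natural amalgam of $r$, $q_\xi$, $q_{\xi'}$ is a condition. The point you are missing is that the induction hypothesis is what selects the compatible pair; you cannot fix $\xi,\eta$ in advance and then hope to thread compatibility through every root coordinate.
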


\begin{proof}
The proof is by induction on $\a$, being the conclusion for $\a=0$ a consequence of the above lemma.

For $\a=\s+1$ the conclusion follows immediately from Lemmas \ref{repr} and \ref{Qchain} together with the induction hypothesis for $\s$ and the fact that the $\al_2$--c.c.\ is preserved under two--step iterations.

For $\a$ a nonzero limit ordinal, suppose $q_\x$ is a $\mtcl P_\a$--condition for all $\x<\o_2$. Suppose first that $\cf(\a) \neq \o_2$. There is then some $\s<\a$ such that $I=\{\x<\o_2\,:\,\supp(q_\x)\sub\s\}$ has size $\al_2$. By induction hypothesis we may find distinct $\x$ and $\x'$ in $I$ such that $q_\x \av_\s$ and $q_{\x'} \av_\s$ are compatible in $\mtcl P_\s$. But now it follows from Lemma \ref{amalg2} that $q_\x$ and $q_{\x'}$ are compatible.

Finally, suppose $\cf(\a)=\o_2$. For each $\x < \o_2$, let $Z_{\x}$ be the union of the sets $\supp(q_\x)$ and $\a \cap \bigcup \dom(\D_{q_\x})$. By $\textsf{CH}$ we may find $I\sub\o_2$ of size $\al_2$ such that $\{Z_{\x} \,:\,\x\in I\}$ forms a $\D$--system with root $X$.

Let now $\s<\a$ be such that $X\sub \s$ ($\s$ exists by $\cf(\a)\geq\o_1$). By induction hypothesis we may find distinct $\x$ and $\x'$ in $I$ such that $q_\x \av_\s$ and $q_{\x'} \av_\s$ are compatible in $\mtcl P_\s$. Let $r_{\x, \x'}$ be a condition in $\mtcl P_\s$ extending $q_\x\av_\s$ and $q_{\x'}\av_\s$. By Lemma \ref{amalg3} it follows that the natural amalgamation of $r_{\x, \x'}$, $q_\x$ and $q_{\x'}$ is a $\mtcl P_\a$--condition extending $q_\x$ and $q_{\x'}$.

\end{proof}

\begin{corollary}\label{cor00} For every $\a\leq\k$ and every cardinal $\k'\geq\k$, $\mtcl P_\a$ forces $H(\k')^{\V[\dot G_\a]}=H(\k')^{\V}[\dot G_\a]$ and forces $(N^\ast\cap H(\k'))[\dot G_\a]= N^\ast[\dot G_\a]\cap H(\k')^{\V[\dot G_\a]}$ whenever $\t$ is regular and $N^\ast$ is a countable elementary substructure of $H(\t)$ such that $\mtcl P_\a\in N^\ast$ and $\k'\in N^\ast$.\end{corollary}

\begin{definition}
Given $\a\leq \kappa$, a condition $q\in\mtcl P_\a$, and a countable elementary substructure $N \prec H(\kappa)$, we will say that $q$ is $(N,\, \mtcl P_\a)$--pre-generic in case
\begin{itemize}

\it[$(\circ)$] $\a < \k$ and the pair $(N, \a)$ is in $\Delta_q$, or else

\it[$(\circ)$] $\a = \k$ and the pair $(N, \textsf{sup}(N\cap \k))$ is in $\Delta_q$.

\end{itemize}
\end{definition}

The properness of all $\mtcl P_\a$ is an immediate consequence of Lemma \ref{horribilis}. Much of the machinery in our definition of $\la\mtcl P_\a\,:\,\a\leq\k\ra$ is there precisely to make the proof of the lemma work. This lemma is proved by reciting very much the same mantra as in the proof of a corresponding lemma in \cite{ASP}.  We will repeat this mantra for the reader's benefit, of course adapted to the present situation.

\begin{lemma}\label{horribilis}

Suppose $\a\leq \kappa$ and $N^\ast\in\mathfrak N^\ast_\a$. Let $N=N^\ast\cap H(\k)$. Then the following conditions hold.

\begin{itemize}

\it[$(1)_\a$] For every $q\in N$ there is $q'\leq_\a q$ such that $q'$ is $(N,\, \mtcl P_\a)$--pre-generic.

\it[$(2)_\a$] If $\mtcl P_\a\in N^\ast$ and $q\in\mtcl P_\a$ is $(N,\, \mtcl P_\a)$--pre-generic, then $q$ is $(N^\ast,\, \mtcl P_\a)$--generic.

\end{itemize}

\end{lemma}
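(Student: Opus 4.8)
The proof is by simultaneous induction on $\a\leq\k$, establishing $(1)_\a$ and $(2)_\a$ together. The two statements feed into each other across the induction: $(2)_\a$ for smaller ordinals is needed to run the genericity argument in $(2)_\a$ at limit stages (via the symmetry conditions), and $(1)_\a$ at stage $\a$ typically uses $(1)$ and $(2)$ below $\a$ together with the representation lemmas \ref{repr} and \ref{compll}.

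\textbf{The base case and successor step for $(1)$.} For $\a=0$, given $q=(\emptyset,\D)\in N$, one extends $\D$ by throwing in the pair $(N,0)$; the resulting $dom(\D)\cup\{N\}$ is still a symmetric system (here $N$ is $\in$-maximal among the structures of its $\o_1$-height, so clauses $(C)$--$(D)$ are vacuous for the new model, and $(B)$ holds because any $N_i\in dom(\D)$ with $\d_{N_i}=\d_N$ already lies in $N$), and the extension is $(N,\mtcl P_0)$--pre-generic by definition. For $\a=\a_0+1$, I would use Lemma \ref{repr} to view $\mtcl P_{\a_0+1}$ inside $\mtcl P_{\a_0}\ast\dot{\mtcl Q}_{\a_0}$: given $q\in N$, first apply $(1)_{\a_0}$ to $q\av_{\a_0}$ to get $q'\leq_{\a_0} q\av_{\a_0}$ that is $(N,\mtcl P_{\a_0})$--pre-generic, then (working below $q'$, in the extension) use that $\Phi(\a_0)$ is $H(\k^+)^V$--regular relative to $\dot G_{\a_0}$ and that $N[\dot G_{\a_0}]$ meets the relevant club $D$ (this is where one checks $N$ itself, suitably elementary in $H(\t_{\a_0+1})$, lands in $D$, using Corollary \ref{cor00}) to find an $(N[\dot G_{\a_0}],\Phi(\a_0))$--generic condition extending $q(\a_0)$; amalgamating and adjoining $(N,\a_0+1)$ to the side condition gives the desired pre-generic extension, with $(C4)$ now satisfied by construction.

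\textbf{The limit step for $(1)$ and the heart of $(2)$.} For $\a$ a nonzero limit, $(1)_\a$ follows by a reflection/amalgamation argument: $q\in N$ has $supp(q)\sub N\cap\a$, so $q\in\mtcl P_{\bar\a}$ for some $\bar\a\in N\cap\a$ (or one works cofinally), applies the induction hypothesis there, and re-assembles using Lemma \ref{compll} and the fact that the marker on the new pair $(N,\a)$ (namely $\a\cap\sup(N\cap\k)=\a$) imposes no constraints beyond those already handled. The real work is $(2)_\a$. Assume $\mtcl P_\a\in N^\ast$ and $q$ is $(N,\mtcl P_\a)$--pre-generic, i.e.\ $(N,\a)\in\D_q$ (or $(N,\sup(N\cap\k))\in\D_q$ if $\a=\k$). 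I want: for every $q^\dagger\leq_\a q$ and every maximal antichain $A\in N^\ast$ of $\mtcl P_\a$, some element of $A\cap N^\ast$ is compatible with $q^\dagger$. By Corollary \ref{cor00}, $A\cap N^\ast = A\cap N$ is what one uses, and it suffices to find $a\in A\cap N$ compatible with $q^\dagger$. One picks $a^\ast\in A$ compatible with $q^\dagger$, uses elementarity of $N$ to pull down an isomorphic-enough copy, and then \emph{symmetrizes}: the side-condition structure of $q^\dagger$, together with clauses $(B)$--$(D)$ of symmetric systems, lets one transport $a^\ast$ via the isomorphisms $\Psi_{N_i,N_{i'}}$ to a condition genuinely inside $N$ while preserving compatibility with $q^\dagger$. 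The successor-coordinate genericity clause $(C4)$ and the inductive $(2)_{\a'}$ for $\a'<\a$ are what make this transport legal coordinate by coordinate.

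\textbf{Main obstacle.} The crux — and the only genuinely delicate point — is the symmetrization in $(2)_\a$ at limit $\a$: one must move an arbitrary condition $a^\ast\in A$ compatible with $q^\dagger$ into $N$ without destroying compatibility, and the bookkeeping of markers $\g$ on the side conditions (introduced precisely so that a model $N'$ with $[\bar\a,\a)\cap N'\neq\emptyset$ exerts no genericity demand past its marker, cf.\ the proof of Lemma \ref{compll}) has to interact correctly with the isomorphisms $\Psi_{N_i,N_{i'}}$ supplied by the symmetric-system axioms. This is where the design of the forcing — finite supports, markers, symmetry — is doing all its work, and it mirrors the corresponding argument in \cite{ASP}; I expect the write-up to follow that blueprint, with the diamond-driven choice of $\Phi$ and the $H(\k^+)^V$--regularity (rather than plain regularity) of the iterands as the new wrinkles to be checked at the successor step described above.
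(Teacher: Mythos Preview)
Your inductive scaffolding and your treatment of $(1)_\a$ are essentially what the paper does. The gap is in $(2)_\a$: the mechanism you describe---picking $a^\ast\in A$ compatible with $q^\dagger$ and then ``transporting $a^\ast$ via the isomorphisms $\Psi_{N_i,N_{i'}}$ to a condition genuinely inside $N$''---does not exist. Those isomorphisms go between side-condition models of the \emph{same} height $\d$; there is no isomorphism carrying an arbitrary $a^\ast\in V$ into $N$, and nothing in clauses $(B)$--$(D)$ provides one.

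The paper's route is different. The point of pre-genericity is that $(N,\a)\in\D_q$ forces, via $(C4)$, that $p(\s)$ is already $(N[\dot G_\s],\Phi(\s))$--generic at every relevant $\s$. At the successor stage $\a=\s+1$ one exploits this directly: define in $V^{\mtcl P_\s}$ a dense set $E\sub\Phi(\s)$ of conditions $v$ that either come from some $a\in\mtcl P_\a$ below $A$ with $a\av_\s\in\dot G_\s$ and $a(\s)=v$, or admit no such $a$ below them; the $(N[\dot G_\s],\Phi(\s))$--genericity of $p(\s)$ hands you $v'\in E\cap N[\dot G_\s]$ compatible with $p(\s)$, and elementarity of $N^\ast[\dot G_\s]$ then produces the witness $a\in A\cap N$. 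At limit $\a$ one instead uses the inductive $(2)_\s$ for a well-chosen $\s<\a$: in $N^\ast[G_\s]$ the existence of some $q^\circ\in A$ with $q^\circ\av_\s\in G_\s$ is witnessed by $q$ itself, so by elementarity such $q^\circ$ lies in $N$. In both cases the element of $A\cap N$ is obtained by \emph{reflection}, not transport.

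You also miss the case split at limits and misplace the role of symmetry. When $cf(\a)=\o$, one simply picks $\s\in N\cap\a$ above all of $supp(q)$ and the amalgamation of $q$ with $q^\circ$ is immediate. When $cf(\a)\geq\o_1$ this is impossible; here symmetry is used---but only to guarantee that $sup(N'\cap N\cap\a)<\s$ can be arranged for every $N'\in dom(\D_q)$ with $\d_{N'}<\d_N$ (via clause $(C)$ one finds $\ov N$ with $N'\in\ov N$ and $\d_{\ov N}=\d_N$, whence $sup(N'\cap N\cap\a)\leq sup(\Psi_{\ov N,N}(N')\cap\a)\in N\cap\a$). The genuine obstacle is then not finding $q^\circ\in A\cap N$ but \emph{amalgamating} $q$ with $q^\circ$ at coordinates $\eta\in supp(q^\circ)\cap[\s,\a)$: the choice of $\s$ ensures every $N'\in dom(\D_q)$ with $\eta\in N'$ satisfies $\d_{N'}\geq\d_N$, so the first component of $p^\circ(\eta)$ lies below $\d_{N'}$ and the $H(\k^+)^V$--regularity of $\Phi(\eta)$ yields a common extension that is $(N'[\dot G_\eta],\Phi(\eta))$--generic.
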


\begin{proof}
The proof will be by induction on $\a$. We start with the case $\a =0$. For simplicity we are going to identify a $\mtcl P_0$--condition $q=(\emptyset, \D_q)$ with $\dom(\D_q)$. The proof of $(1)_0$ is trivial: It suffices to set $q'= q\cup\{N\}$.

The proof of $(2)_0$ is also easy: Let $E$ be a dense subset of $\mtcl P_0$ in $N^\ast$. It suffices to show that there is some condition in $E\cap N^\ast$ compatible with $q$. Notice that $q\cap N^\ast\in\mtcl P_0$ by Lemma \ref{iso2} (i). Hence, we may find a condition $q^\circ\in E\cap N^\ast$ extending $q\cap N^\ast$. Now, by Lemma \ref{iso2} there is a partial $\Phi$--symmetric system $\mtcl M$ extending $q\cup q^\circ$. It follows that $\mtcl M$ is a condition in $\mtcl P_0$ extending $q$ and $q^\circ$.

Let us proceed to the case $\a=\s+1$. We start by proving $(1)_\a$. Assume first that $\sigma\in\supp(q)$.  By $(1)_\s$ we may assume, by extending  $q\av_\s$, that $q\av_\s$ is $(N, \mtcl P_\s)$--pre-generic. Let $\dot D$ be the $\lhd$--first $\mtcl P_\s$--name for a club $D$ of $[H(\k)]^{\al_0}$ in $\V$ such that $D$ witnesses that $\Upsilon(\s)$ is $\aleph_{1.5}$--c.c.\ relative to $\V$ and $\dot G_\s$. Note that $q\av_\s$ forces $N \in \dot D$ since $\dot D\in N^\ast$ and since $q\av_\s$ forces $H(\k)^{N^\ast[\dot G_\s]}=N$ by $(2)_\s$.
But then there is some $t \in \mtcl P_\s$ extending $q\av_\s$ and some $y\in H(\k)$ such that $t$ forces that $y$ is an $(N[\dot G_{\s}], \Upsilon(\s))$--generic condition extending  $F_{q}(\s)$. It suffices to define $q'$ as the condition $(F_t\cup\{\la\s,  y \ra\}, \D_q \cup \D_t \cup \{(N,\a)\})$ (Lemma \ref{amalg1} ensures that $q'$ is indeed a condition extending $q$).

The proof in the case that $q=(F, \D)$ with $\dom(F)\sub\s$ can be reduced to the previous case by the following Claim.

\begin{claim}\label{quecosa}
If $q=(F, \D)$ and $\s\notin \dom(F)$, then we can find a condition $q'=(F', \D')$ extending $q$ and such that $\s\in \dom(F')$.
\end{claim}

\begin{proof}
This is true, using Lemma \ref{amalg1.5}, by essentially the same argument as above since $(2)_\sigma$ guarantees that $q\av_\s$ is also $(M^\ast,\, \mtcl P_\s)$--generic for all $M^\ast \in \mathfrak N^\ast_{\s+1}$ such that $(M, \s+1)\in\D_q$ for $M=M^\ast\cap H(\k)$, which implies that a condition forcing  that all these $M$ are in $\dot D$ can be found as in that argument. Also note that, by its being definable, the  weakest condition of $\Upsilon(\s)$ is forced to be in $H(\k)^{M^\ast[\dot G_\s]}= M[\dot G_\s]$  for all these $M$'s, where the equality holds by Corollary \ref{cor00}.
\end{proof}

Now let us prove $(2)_\a$.
Let $A$ be a maximal antichain of $\mtcl P_\a$ in $N^\ast$,
and assume without loss of generality that $q=(F_q, \D_q)$ extends some condition $q^\ast$ in $A$. We must show $q^\ast\in N$.
Note that $A$ is in $N$ by the $\al_2$--c.c.\ of $\mtcl P_\a$. By Claim \ref{quecosa} we may also assume that $\sigma \in \dom(F_q)$.  Let $G_{\sigma}$ be a $\mtcl P_{\sigma}$-generic filter over $\V$ with $q\av_{\sigma} \in G_{\sigma}$. By $(2)_\s$ we have that $G_{\sigma}$ is also generic over
$N^\ast$.

Let $E$ be the set of $\Upsilon(\s)$--conditions $v$ such that either

\begin{itemize}

\it[(i)] there exists some $a  \in \mtcl P_\a$ extending some member of $A$ such that $a\av_\s\in  G_\s$, $\s\in \dom(F_a)$, and such that $F_a(\s)=v$, or else

\it[(ii)] there is no $a \in \mtcl P_\a$ extending any member of $A$ such that $a\av_\s\in \dot G_\s$, $\s\in \dom(F_a)$, and such that $F_a(\s)\leq_{\Phi(\s)} v$.

\end{itemize}

$E$ is a dense subset of $\Upsilon(\s)$, and $E\in N^\ast[G_\s]$ by $E$ being definable in $H(\k^+)^{\V}[\dot G_\s]$ from $\mtcl P_\a$ together with the fact that $H(\k^+)^{N^\ast[G_\s]}\elsub H(\k^+)^{\V}[\dot G_\s]$ and $N^\ast$
contains $\mtcl P_\a$ and $A$. Note that $E$ is in fact in $N[\dot G_\s]$ by Corollary \ref{cor00}. Suppose $F_q(\s)=\ov v$. Since $\ov v$ is $(N[ G_\s],\,\Upsilon(\s))$--generic, we may find some $v'\in E\cap N[ G_\s]$ and some $v^\ast$ $\Upsilon(\s)$--extending both $v'$ and
$\ov v$.

\begin{claim}\label{cl}
Condition (i) above holds for $v'$.
\end{claim}

\begin{proof}
Let $r$ be a condition in $G_\s$ extending $q\av_\s$ and deciding $v^\ast$, and let $u = (F_r\cup\{\la\s, v^\ast\ra\}, \D_r\cup \D_q)$. By Lemma \ref{amalg1}, $u$ is a $\mtcl P_\a$--condition extending $q$. In particular, $u$ extends a condition in $A$, and therefore it witnesses the negation of condition (ii) for $v'$, so condition (i) must hold for $v'$.

\end{proof}

By the above claim and by $H(\k^+)^{N^\ast[G_\s]}\elsub H(\k^+)^{\V}[G_\s]$ there is $a$ in $N^\ast[G_\s]$ witnessing that condition (i) holds for $v'$, and actually $a\in N$ since $N^\ast[G_\s]\cap \V = N^\ast$ by $(2)_\s$. Now we extend $q\av_\s$ to a condition $r$ deciding $a$, and deciding also some common extension $v^\ast\in\Upsilon(\s)$ of $\ov v$ and $v'$. We may also assume that $r$ extends $a\av_\s$.  It follows that $(F_r \cup\{\la\s, v^\ast\ra\}, \D_r\cup \D_a\cup\D_q)$ is a common extension of $q$ and $a$ by Lemma \ref{amalg1}. Hence $q^\ast=a$.

It remains to prove the lemma for the case when $\a$ is a nonzero limit ordinal. The proof of $(1)_\a$ is easy.
Let $\s\in N\cap\a$ be above $\supp(q)$. By induction hypothesis we may find $r\in\mtcl P_\s$ extending $q\av_\s$ and such that $(N, \s)\in\D_r$. One can now easily check that the result of stretching the marker $\s$ in $(N, \s)$ up to $\a$ if $\a < \k$ and up to $\textsf{sup}(N\cap\k)$ if $\a = \k$ is a condition in $\mtcl P_\a$ extending $q$ with the desired property.

For $(2)_\a$, let $A$ be a maximal antichain of $\mtcl P_\a$ in $N^\ast$,
and assume without loss of generality that $q=(F_q, \D_q)$ extends some condition $q^\ast$ in $A$. We must show $q^\ast\in N$.
Suppose first that $\cf(\a)=\o$. In this case we may take $\s\in N^\ast\cap\a$ above $\supp(q)$. Let $G_\s$ be $\mtcl P_\s$--generic with $q\av_\s\in G_\s$. In $N^\ast[G_\s]$ it is true that there is a condition $q^\circ\in\mtcl P_\a$ such that

\begin{itemize}

\it[(a)] $q^\circ\in A$ and $q^\circ\av_\s\in G_\s$, and

\it[(b)] $\supp(q^\circ)\sub \s$.

\end{itemize}

\noindent (the existence of such a $q^\circ$ is witnessed in $\V[G_\s]$ by $q^\ast$.)

Since $q\av_\s$ is $(N^\ast, \mtcl P_\s)$--generic by induction hypothesis, $q^\circ\in N^\ast$. By extending $q$ below $\s$ if necessary, we may assume that $q\av_\s$ decides $q^\circ$ and extends $q^\circ\av_\s$. But now, if $q=(F_q, \D_q)$, the natural amalgamation $(F_q, \D_q\cup \D_{q^\circ})$ of $q$ and $q^\circ$ is a $\mtcl P_\a$--condition extending them by Lemma \ref{amalg2}. It follows that $q^\ast = q^\circ$.

Finally, suppose $\cf(\a)\geq\o_1$. This will be the only place where we use the (partial) symmetry of $\dom(\D_q)$. The crucial observation in this case is that if $N'\in \dom(\D_q)$ is such that $\d_{N'}<\d_N$ and $N''\in N\cap\dom(\D_q)$ is such that $\d_{N''}=\d_{N'}$ and $N''\cap N'=N'\cap N$, then $\textsf{sup}(N'\cap N\cap \a)\leq \textsf{sup}(N''\cap\a)$. Hence, since such an $N''$ is countable, we may fix $\s\in N\cap \a$ above $\supp(q)\cap N$ and above $\textsf{sup}(N'\cap N\cap \a)$ for all $N'\in \dom(\D_q)$ with $\d_{N'}<\d_N$.

As in the above case, if $G_\s$ is $\mtcl P_\s$--generic with $q\av_\s\in G_\s$, then in $N^\ast[G_\s]$ we can find a condition $q^\circ \in A$ such that $q^\circ\av_\s\in G_\s$ (again, the existence of such a condition is witnessed in $\V[G_\s]$ by $q$), and such a $q^\circ$ will necessarily be in $N^\ast$. By extending $q$ below $\s$ we may assume that $q\av_\s$ decides $q^\circ$ and extends $q^\circ\av_\s$. The proof of $(2)_\a$ in this case will be finished if we can show that there is a condition $\ov{q}$ extending $q$ and $q^\circ$.

The condition $\ov{q}$ can be built by recursion on $\supp(q^\circ)\setminus\s$ (note that by the choice of $\sigma$, $\min(\supp(q)\setminus \sigma)\geq \textsf{sup}(N\cap\alpha)$, and therefore $\min(\supp(q)\setminus \sigma)> \max(\supp(q^\circ))$). This finite construction mimics the proof of $(1)_\a$ for successor $\a$. The details are as follows.

Let $(\x_i)_{i<r}$ be the strictly increasing enumeration of $\supp(q^\circ)\setminus \s$. We may assume that $r>0$, and therefore $r-1\geq 0$. We build a sequence $(q_i)_{i<r}$ of conditions as follows:

For $i=0$, we first extend $q\av_\s$ to a $\mtcl P_{\x_0}$--condition $r$ extending $q\av_{\x_0}$ and $q^\circ\av_{\x_0}$. $r$ can be found by appealing to Lemma \ref{amalg2} if $\s < \x_0$, and if $\s=\x_0$ it is enough of course to set $r=q\av_\s$.

If $\ov{N}$ is a relevant structure -- meaning that $(\ov{N}, \x_0+1)\in\D_{q\av_{\x_0+1}}$ and $\ov{N}\in\mathfrak N_{\x_0+1}$ --, then $r$
forces also that the first club $D\sub [H(\k)^{\V}]^{\al_0}$ in $\V$ (in the well--order of $H(\k^+)[\dot G_{\x_0}]$ induced by $\lhd$) witnessing that the poset $\Upsilon(\x_0)$ is $\aleph_{1.5}$--c.c.\ relative to $\V$ and $\dot G_{\x_0}$ is such that every relevant $\ov{N}$ is in $D$. (For such an $\ov{N}$ there is some $\ov{N}^\ast \in \mathfrak N^\ast_{\x_0+1}$ containing $\lhd$ and such that $\ov{N}=\ov{N}^\ast \cap H(\kappa)$ which, since $\lhd\in \ov{N}^\ast$, implies that $\ov{N}^\ast$ contains a name $\dot D$ for that club. Applying this fact and $(2)_{\x_0}$ we conclude that $\overline{q}\av_{\x_0}$ forces $\ov{N} \in \dot D$.) Finally, $r$ forces that $F_{q^\circ}(\x_0)$ is in $N[\dot G_{\x_0}]$ and hence in $N'[\dot G_{\x_0}]$ for some relevant $N'$ with $\d_{N'[\dot G_{\x_0}]}$ minimal. The reason is that $\d_{N'[\dot G_{\x_0}]}=\d_{N'}$ for every relevant $N'$ by $(2)_{\x_0}$ and that by our choice of $\s$ we know that
if $\ov{N}\in \dom(\D_{q})$ and $\x_0 \in \ov{N}$, then $\d_{\ov{N}}\geq\d_{N}$. Putting these facts together we get that there is some $v^\ast\in H(\k)$ and some extension of $r$ forcing that $v^{\ast}$ $\Upsilon(\x_0)$--extends $F_{q^\circ}(\x_0)$ and is $(\ov{N}[\dot G_{\x_0}],\,\Upsilon(\x_0))$--generic for all relevant $\ov{N}$.
It follows now from Lemma \ref{amalg1} that there is a $\mtcl P_{\x_0+1}$--condition $q_0$ extending $r$, $q\av_{\x_0+1}$ and $q^\circ\av_{\x_0+1}$.

For $i$ such that $i+1<r$, we assume inductively that $q_i\in\mtcl P_{\x_i+1}$ extends $q\av_{\x_i+1}$ and $q^\circ\av_{\x_i+1}$, and obtain $q_{i+1}\in\mtcl P_{\x_{i+1}+1}$ from $q_i$ by arguing exactly as in the case $i=0$ with $\x_{i+1}$ instead of $\x_0$ and starting with $q_i$ rather than $q\av_\s$. In the end we obtain $q_{i+1}\in\mtcl P_{\x_{i+1}+1}$ extending both $q\av_{\x_{i+1}+1}$ and $q^\circ\av_{\x_{i+1}+1}$. Let $\mu = \xi_{r-1}=  \max(\supp(q^\circ))$ and let $$\ov{q} = (F_{q_{r-1}}\cup (F_q \restr [\mu+1,\, \a)), \D_{q_{r-1}} \cup \D_{q^\circ} \cup \D_{q})$$

\begin{claim}
$\ov q$ is a condition in $\mtcl P_{\a}$ extending both $q$ and $q^{\circ}$.
\end{claim}

\begin{proof}
We prove by induction that if $\mu+1 \leq \xi \leq \alpha$, then $\ov{q}\av_{\xi}$ is in $\mtcl P_{\xi}$ and $\ov{q}\av_{\xi} \leq_{\xi} q^\circ|_{\xi}$,  $q\av_{\xi}$. The case $\xi= \mu+1$ follows from Lemma \ref{preorder}, since it implies that $\ov{q}\av_{\mu+1}$ is equivalent to $q_{r-1}$ (recall that $q_{r-1} \leq_{\mu+1} q^\circ\av_{\mu+1}$, $q\av_{\mu+1}$). Assume now that $\xi=\eta+1$ for $\eta \geq \mu+1$. It suffices to show that $\ov{q}\av_{\eta+1}$ satisfies condition (C4) in the definition of $\mtcl P_{\eta+1}$ since it clearly satisfies the other conditions. In other words, we must show that if $\eta\in dom(F_q)$, then $\ov{q}\av_\eta$ forces that $F_q(\eta)$ is $(M[\dot{G}_\eta], \Upsilon(\eta))$--generic for all those $M \in \mathfrak N_{\eta+1}$ for which there exists an ordinal $\beta \geq \eta+1$ such that $(M, \beta) \in \D_{q_{r-1}} \cup \D_{q^\circ} \cup \D_{q}$. But such a pair $(M, \beta)$ cannot be in $\D_{q_{r-1}}$, since all markers occurring in side conditions in $q_{r-1} \in \mtcl P_{\mu+1}$ are at most $\mu+1< \eta+1$. On the other hand, we know that $\eta \in \supp(q) \setminus \sigma =\supp(q) \setminus (N\cap \alpha)$. It follows that there is no $M \in \dom(\D_{q^\circ})$ such that $M \in \mathfrak N_{\eta+1}$ (such a countable $M$ is in $N$, and therefore $M\cap \alpha\sub N\cap \alpha$), and hence $(M, \beta)$ is neither in $\D_{q^\circ}$. We conclude that such a pair $(M, \beta)$ is in $\D_{q}$. By (C4) applied to $q\av_{\eta+1}$, we have that $q\av_{\eta}$ (and hence, $\ov{q}\av_\eta$) forces what we want. Finally note that the induction hypothesis
$\ov{q}\av_{\eta} \leq_{\eta} q^\circ\av_{\eta}$, $q\av_{\eta}$, the definition of $\ov{q}$, and the fact that the maximum of the support of $q^{\circ}$ is $\mu< \eta$ together imply that $\ov{q}\av_{\eta+1} \leq_{\eta+1} q^\circ \av_{\eta+1}$, $q\av_{\eta+1}$. The case when $\xi$ is limit follows from the induction hypothesis.
\end{proof}

The above claim finishes the proof of $(2)_\a$ for limit $\a$ and the proof of the lemma.
\end{proof}

\begin{corollary} For all $\a\leq\k$, $\mtcl P_\a$ is proper.\end{corollary}

The following lemma is a straightforward application of Claim \ref{quecosa} and Lemma \ref{compll}.

\begin{lemma}\label{genh} For every $\a<\k$ and every condition $q\in\mtcl P_\k$, $q$ forces that the collection of all $y$ such that there is some $(F, \D)\in\dot G_\k$ with $F(\a)= y$ generates a $\V[\dot G_\a]$--generic filter on $\Upsilon(\a)$.\end{lemma}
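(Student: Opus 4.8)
The plan is to read the statement off from Lemma~\ref{repr} together with the standard fact that a projection of forcing notions maps generic filters to (filters generating) generic filters. Since $\mtcl P_{\a+1}$ is a complete suborder of $\mtcl P_\k$ by Lemma~\ref{compll}, in $V[\dot G_\k]$ the restriction $\dot G_\k\cap\mtcl P_{\a+1}$ is $\mtcl P_{\a+1}$--generic over $V$; via the dense embedding of Lemma~\ref{repr} it therefore induces a $\mtcl P_\a\ast\dot{\mtcl Q}_\a$--generic filter, which splits into the $\mtcl P_\a$--generic $\dot G_\a$ (again equal to $\dot G_\k\cap\mtcl P_\a$) together with a filter $K\sub\dot{\mtcl Q}_\a$ generic over $V[\dot G_\a]$. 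Unravelling the isomorphism $\psi$ of Lemma~\ref{repr}, the set of all $v$ occurring in the statement is exactly $\{v\,:\,(v,\mtcl Q)\in K\textrm{ for some }\mtcl Q\}$, i.e.\ the image of $K$ under the first--coordinate map $\rho(v,\mtcl Q)=v$ (here one uses that a condition of $\mtcl P_\k$ with $\a$ in its support restricts to a condition of $\mtcl P_{\a+1}$ with $\a$ in its support, and conversely every element of $\dot G_\k\cap\mtcl P_{\a+1}$ extends, inside $\dot G_\k$, to a condition with $\a$ in its support --- the latter by (the argument behind) Remark~\ref{quecosa}).

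It therefore suffices to check that $\rho$ is a projection from $\dot{\mtcl Q}_\a$ onto $\Phi(\a)$, both computed in $V[\dot G_\a]$; the upward closure of $\rho[K]$ is then $\Phi(\a)$--generic over $V[\dot G_\a]$. That $\rho$ is order preserving and sends the weakest condition to the weakest condition is immediate from the definition of $\leq_{\dot{\mtcl Q}_\a}$. The remaining clause amounts to this: if $(v_0,\mtcl Q_0)\in\dot{\mtcl Q}_\a$ and $w$ $\Phi(\a)$--extends $v_0$, then $(w,\mtcl Q_0)\in\dot{\mtcl Q}_\a$. By the definition of $\dot{\mtcl Q}_\a$ this reduces to showing that whenever $(p\cup\{\la\a,v_0\ra\},\D\cup\{(N,\a+1)\,:\,N\in\mtcl Q_0\})\in\mtcl P_{\a+1}$, also $(p\cup\{\la\a,w\ra\},\D\cup\{(N,\a+1)\,:\,N\in\mtcl Q_0\})\in\mtcl P_{\a+1}$; since only the coordinate $\a$ is being changed, only clauses $(C3)$ and $(C4)$ there need to be re--examined, and they hold because $w\in\Phi(\a)[\dot G_\a]\sub H(\k)^{V[\dot G_\a]}=H(\k)^V[\dot G_\a]$ (Corollary~\ref{cor00}) and because $(N,\Phi(\a))$--genericity of a condition is inherited by all its $\Phi(\a)$--extensions.

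I do not expect any real obstacle here: the content of the lemma is just that the iteration behaves like an honest iteration at the coordinate $\a$. The only two things needing a little care are the name/object bookkeeping implicit in Lemma~\ref{repr} --- so that the ``surgery'' replacing the $\a$--th coordinate by a smaller one is legitimate, which is where $2^{<\k}=\k$ (whence $|\mtcl P_\a|\le\k$ and small nice names exist) and Corollary~\ref{cor00} enter --- and the downward persistence of $(N,\Phi(\a))$--genericity, which is precisely what makes $\rho$ a genuine projection. One could also bypass Lemma~\ref{repr} entirely and argue directly: the upward closure $\bar H$ in $\Phi(\a)$ of the displayed set of $v$'s is a filter, because $\dot G_\k$ is directed and, by clause $(D2)$, the $\a$--th coordinate of a common extension of two conditions is forced to $\Phi(\a)$--extend the $\a$--th coordinates of both; and $\bar H$ meets $\dot D[\dot G_\a]$ for every $\mtcl P_\a$--name $\dot D$ forced to be dense in $\Phi(\a)$, by extending a given condition (using Remark~\ref{quecosa}) to one with $\a$ in its support and then replacing the name at coordinate $\a$ by a name for an element of $\dot D$ below it --- legitimate by exactly the same two observations.
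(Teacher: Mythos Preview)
Your argument is correct. The paper's own proof is the single line ``See Remark~\ref{quecosa}'', i.e.\ exactly the direct density argument you sketch in your final paragraph: given a name $\dot D$ for a dense subset of $\Phi(\a)$, extend an arbitrary condition so that $\a$ lies in its support (this is Remark~\ref{quecosa}), and then replace the $\a$--th coordinate by a stronger value lying in $\dot D$, which is legitimate because $(N,\Phi(\a))$--genericity passes to $\Phi(\a)$--extensions. So your second approach \emph{is} the paper's approach.

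Your primary route --- factoring through Lemma~\ref{repr} and checking that $\rho(v,\mtcl Q)=v$ is a projection $\dot{\mtcl Q}_\a\to\Phi(\a)$ --- is a genuinely different packaging of the same two observations (surgery at coordinate $\a$ is allowed by $(C3)$, and $(C4)$ is downward persistent). It has the virtue of making explicit why the resulting set of $v$'s is not merely a filter but a \emph{generic} one: this is the general fact about projections, rather than an ad hoc density computation. The cost is the extra bookkeeping you flag yourself, namely matching up $\{v:\exists(p,\D)\in\dot G_\k,\,p(\a)=v\}$ with the first coordinates of the induced $\dot{\mtcl Q}_\a$--generic, which requires knowing that conditions with $\a$ in their support are dense (again Remark~\ref{quecosa}). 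One small point: your appeal to Corollary~\ref{cor00} for clause $(C3)$ is really justifying that $w$ has a name in $H(\k)^V$; the paper handles the analogous step in the proof of $(1)_\a$ of Lemma~\ref{horribilis} by speaking of a condition ``deciding $v^\ast$'', so your treatment is in line with the paper's conventions.
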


Lemma \ref{bound} follows from the usual counting of nice names for subsets of $\k$ using $(\k^{<\k})^{\V}=\k$ and Lemma \ref{cc}.

\begin{lemma}\label{bound} $\mtcl P_\k$ forces $\k^{<\k}=\k$.\end{lemma}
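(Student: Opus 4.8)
The plan is to run the standard counting of nice names for subsets of ordinals below $\k$; the lemma is routine once three preliminary facts are assembled.

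First I would record that $\av\mtcl P_\k\av=\k$. By Lemma \ref{suborder} and the definition of the conditions, every $q=(p,\D)\in\mtcl P_\k$ is coded by a finite partial function $p$ from $\k$ into $H(\k)$ together with a finite subset $\D$ of $[H(\k)]^{\al_0}\times\k$; since $2^{<\k}=\k$ (as already noted) we have $\av H(\k)\av=\k$ and $\av[H(\k)]^{\al_0}\av\leq\k^{\al_1}=\k$, so there are at most $\k$ such codes, and clearly at least $\k$ conditions (for instance $(\emptyset,\{(N,0)\})$ for $N\in\mtcl M_0$). Second, $\mtcl P_\k$ preserves all cardinals and cofinalities: it is proper, hence $\o_1$ is preserved, and it has the $\al_2$--chain condition by Lemma \ref{cc}, hence cardinals and cofinalities $\geq\al_2$ are preserved; since $\k\geq\o_3$, this means $\k$ is still a regular cardinal in $V[\dot G_\k]$ and the cardinals below $\k$ are the same in $V$ and in $V[\dot G_\k]$. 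Third, $(\k^{\al_1})^V=\k$: as $\k$ is regular and $>\al_1$, every function from $\al_1$ into $\k$ is bounded, so ${}^{\al_1}\k=\bigcup_{\m<\k}{}^{\al_1}\m$ is a union of $\k$--many sets of size $\m^{\al_1}<\k$; similarly $(\k^{<\k})^V=\k$, which also follows directly from $2^{<\k}=\k$ and the regularity of $\k$.

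Next I would carry out the count. Fix $\gamma<\k$. Every $A\sub\gamma$ lying in $V[\dot G_\k]$ has a nice $\mtcl P_\k$--name $\dot A=\{\la\check\x,p\ra\,:\,\x<\gamma,\,p\in A_\x\}$ for a sequence $\la A_\x\,:\,\x<\gamma\ra$ of antichains of $\mtcl P_\k$. By Lemma \ref{cc} each $A_\x$ has size at most $\al_1$, so there are at most $(\av\mtcl P_\k\av^{\al_1})^V=(\k^{\al_1})^V=\k$ possibilities for each $A_\x$, and hence at most $(\k^{\av\gamma\av})^V\leq(\k^{<\k})^V=\k$ such names. Therefore $\mtcl P_\k$ forces $2^{\av\gamma\av}=\av\mtcl P(\gamma)\av\leq\k$ for every $\gamma<\k$. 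To finish I would argue inside $V[\dot G_\k]$: since $\k$ is regular there, $\k^{<\k}=sup\{\k^\m\,:\,\m<\k\}$, and for each infinite cardinal $\m<\k$ regularity of $\k$ gives $\k^\m\leq\k\cdot sup\{\l^\m\,:\,\l<\k\}$, while the previous step yields $\l^\m\leq 2^{max(\l,\m)}\leq\k$ for $\l<\k$; hence $\k^\m\leq\k$, so $\k^{<\k}\leq\k$, the reverse inequality being trivial, and thus $\mtcl P_\k$ forces $\k^{<\k}=\k$.

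I do not expect a genuine obstacle: everything is routine cardinal arithmetic. The hypotheses of Theorem \ref{mainthm} enter only through the computations $\av\mtcl P_\k\av=\k$ and $(\k^{\al_1})^V=\k$ (resting on $2^{<\k}=\k$ and on $\m^{\al_1}<\k$ for all $\m<\k$) and through the preservation of $\k$ as a regular cardinal, which is Lemma \ref{cc} together with properness; the only care needed is in upgrading the bound $2^{\av\gamma\av}\leq\k$, obtained in $V[\dot G_\k]$, to $\k^{<\k}\leq\k$ there.
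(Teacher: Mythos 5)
Your proof is correct and follows exactly the route the paper indicates for Lemma \ref{bound}: the standard counting of nice names, using $(\k^{<\k})^V=\k$ (from $2^{<\k}=\k$, $\m^{\al_1}<\k$, and regularity of $\k$) together with the $\al_2$--chain condition of Lemma \ref{cc}. The paper leaves these details as routine, and your write-up simply fills them in, including the preservation of $\k$ as a regular cardinal and the final upgrade from $2^{\av\gamma\av}\leq\k$ to $\k^{<\k}=\k$ in the extension.
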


\begin{lemma}\label{v-proper}
If $\dot{\mtcl R}$ is a $\mtcl P_\k$--name for an $\aleph_{1.5}$--c.c.\ poset included in $H(\kappa)^{\V}$, then $\mtcl P_\k$ forces that $\dot{\mtcl R}$ is $\aleph_{1.5}$--c.c.\ relative to $\V$ and $\dot G_\k$.

\end{lemma}

\begin{proof}
Let $G$ be generic for $\mtcl P_\k$, let $\mtcl R$ be the interpretation of $\dot{\mtcl R}$ by $G$, and let $\dot{\mtcl D}$ be a $\mtcl P_\k$--name for a club of $[H(\k^+)[\dot G_\k]]^{\aleph_0}$ witnessing that $\mtcl R$ has the $\aleph_{1.5}$--chain condition.\footnote{Note that there is such a $\dot{\mtcl D}$ since $\av\mtcl R\av^{\V[G]}\leq\k$  by Lemma \ref{bound} and therefore, by definition of $\al_{1.5}$--c.c., there must be a club of $[H(\k^+)^{\V[G]}]^{\al_0}$ in $\V[G]$ witnessing that $\mtcl R$ is $\al_{1.5}$--c.c. Also, remember that $\mtcl P_\k$ forces $H(\k^+)[\dot G_\k]= H(\k^+)^{\V[\dot G_\k]}$ (Corollary \ref{cor00}).} Fix a sufficiently large cardinal $\theta$ such that $\dot{\mtcl R}$ and $\dot{\mtcl D}$ are in $H(\theta)$. Define $\mathfrak N^\ast$ as the set of all countable elementary substructures of $H(\theta)$ containing  all the relevant parameters. Let
also $\mathfrak N=\{N^\ast\cap H(\k)  \,:\,N^\ast\in\mathfrak N^\ast\}$ and note that, by properness together with Lemma \ref{horribilis}, the club $\mathfrak N$ witnesses that $\dot{\mtcl R}$ is $\aleph_{1.5}$--c.c.\ relative to $\V$ and $\dot G_\k$:

If $u \in \mtcl P_\kappa$ and $\{N_i\,:\, i \in m\} \subseteq \mathfrak N$ are such that $\D_u$ includes $\{(N_i, \textsf{sup}(N_i \cap\kappa) )\,: \,i<m\}$, then $u$ forces that $(N_i^\ast \cap H(\k^+))[\dot G_\k]=H(\k^+)^{N_i^\ast[\dot G_\k]}$ is in $\dot{\mtcl D}$ whenever $N_i^\ast \in \mathfrak N^\ast$ is such that $N_i^\ast= N_i \cap H(\kappa)$. Also note that, by the assumption $\dot{\mtcl R} \sub H(\k)$ and the $\aleph_2$--chain condition of this poset, $u$ forces that  $(N_i^\ast \cap H(\k^+))[\dot G_\k]$ and $N_i[\dot G_\k]$ have the same maximal antichains of $\dot{\mtcl R}$. It follows that $u$ forces that a condition in $\dot{\mtcl R}$ is
$((N_i^\ast \cap H(\k^+))[\dot G_\k], \dot{\mtcl R})$--generic iff it is $(N_i[\dot G_\k], \dot{\mtcl R})$--generic. Thus $u$ forces that if $y\in\dot{\mtcl R}$ belongs to some $N_i[\dot G_\k]$ with $\d_{N_i}$ minimal among $\{\d_{N_j}\,:\,j<m\}$, then there is an extension of $y$ in $\dot{\mtcl R}$ which is $(N_i[\dot G_\k],\,\dot{\mtcl R})$--generic for all $i$. This is true since $u$ forces $\d_{(N_i^\ast \cap H(\k^+))[\dot G_\k]}=\d_{N_i[\dot G_\k]}=\d_{N_i}$ by Lemma \ref{horribilis}.
\end{proof}

\begin{definition}\label{reflection}
Let $Q$ be an elementary substructure of $H(\t)$, for some regular $\t >\t_\k$, and suppose $Q$ is closed under $\o$--sequences and contains $\Phi$, $\lhd$ and $\vec X$. Suppose $Q\cap\k$ is an ordinal $\d$ in $\k$. Let $\a\leq \d$, let $q$ be a condition in $\mtcl P_\a$, and let $q^\ast\in\mtcl P_\a\cap Q$ be a condition with the following properties.

\begin{enumerate}
\it $F_{q^\ast}=F_q$
\it There is an enumeration $((N_i, \g_i)\,:\,i<n)$ of $\dom(\D_q)$ and an enumeration $((M_i, \tld\g_i)\,:\,i<n)$ of $\dom(\D_{q^\ast})$ such that
\begin{itemize}
\it there is a set $R$ with $$(\bigcup\dom(\D_q))\cap(\bigcup\dom(\D_{q^\ast}))=R=(\bigcup\dom(\D_q))\cap Q$$ and an isomorphism $\Psi$ between $$\la\bigcup(\dom(\D_{q^\ast}),\in, R, \Phi, M_i\ra_{i<n}$$ and $$\la\bigcup(\dom(\D_q)), \in, R, \Phi, N_i\ra_{i<n}$$ which is the identity on $R$, and such that
\it for all $i<n$,  $\tld\g_i=\g_i$ if $\g_i<\d$ and $\tld\g_i=\textsf{sup}(N_i\cap\d)$ if $\g_i=\d$, and for all $\x\in R\cap N_i\cap \a$,
\begin{enumerate}
\it $M_i\in\mathfrak N_{\x+1}$ iff $N_i\in\mathfrak N_{\x+1}$, and
\it if $M_i\in \mathfrak N_{\x+1}$, then $\Psi\restr M_i$ is an isomorphism between $(M_i, \in, \mtcl P_\x)$ and $(N_i, \in, \mtcl P_\x)$.
\end{enumerate}
\end{itemize}
\end{enumerate}

Then we say that $q^\ast$ is {\emph a reflection of $q$ to $Q$}.
\end{definition}

Lemma \ref{fa} will make use of the following reflection result.

\begin{lemma}\label{refl}
Let $Q$ be an elementary substructure of $H(\t)$, for some regular $\t >\t_\k$, and suppose $Q$ is closed under $\o$--sequences and contains $\Phi$, $\lhd$ and $\vec X$. Suppose $Q\cap\k$ is an ordinal $\d$ in $\k$.  Let $\a\leq\d$, let $q_0\in\mtcl P_\a\cap Q$ and let $q=(F_q, \ \{(N_i, \g_i)\,:\,i<n\})$ be a $\mtcl P_\a$--condition extending $q_0$. Then

\begin{enumerate}
\it there is a reflection of $q$ to $Q$ extending $q_0$, and
\it for every $q^\ast$, if $q^\ast=(F_q, \{(M_i, \tld\g_i)\})$ is a reflection of $q$ to $Q$ as witnessed by the isomorphism $$\Psi:\la\bigcup(\dom(\D_{q^\ast}),\in, R, \Phi, M_i\ra_{i<n}\into \la\bigcup(\dom(\D_q)), \in, R, \Phi, N_i\ra_{i<n}$$ and $q^\dag\in Q\cap\mtcl P_\a$ extends $q^\ast$, then $$\ov q=(F_{q^\dag}, \D_{q^\dag}\cup \D_q\cup\{(M, 0)\,:\,M\in\mtcl N\}\cup \D)$$ is a condition in $\mtcl P_\a$, where $\mtcl N$ is the collection of all  $\Psi_{N^0, N^1}(M)$ such that $M\in \dom(\D_{q^\dag})\cap N^0$, $N^0\in\dom(\D_{q^\ast})$, $N^1\in\dom(\D_q)$, and $\d_{N^0}=\d_{N^1}$, and where $\D$ is the collection of all pairs $(\Psi(M), \g)$ such that there is some $i$ with
\begin{enumerate}
\it $(M, \g)\in M_i\cap\D_{q^\dag}$,
\it $\g\leq \tld\g_i$, and
\it $M_i\in \mathfrak N_{\g+1}$
\end{enumerate}
\end{enumerate}
\end{lemma}

\begin{proof}
 We prove the first part first. For all $i$ let $\tld\g_i$ be obtained from $\g_i$ as in the definition of reflection of $q$ to $Q$. Of course $\tld\g_i=\g_i$ unless $\a=\d$. In any case $\tld\g_i\in Q$ since $Q$ is $\o$--closed. It is straightforward to see that $\tld q=(F_q, \{(N_i, \tld\g_i)\,:\,i<n\})$ is a condition in $\mtcl P_\a$ extending $q_0$. Note also that $F_q$ is in $Q$ since for every $\x<\a$, $X_\x\in Q$, $X_\x\sub Q$, and $\mtcl P_\x$ has the $\al_2$--c.c.\ (and therefore for every $\x<\a$ there is a set $Y_\x\sub H(\k)$, $Y_\x\in Q$, such that for every $v\in H(\k)$ and every $\mtcl P_\x$--condition $r$, if $r\Vdash_\x\check{v}\in \Upsilon(\x)$, then $v\in Y_\x$). Since $Q\elsub H(\t)$ contains all reals, $q_0$, $F_q$, $(\tld\g_i)_{i<n}$ and $\{(N_i\cap Q, \tld\g_i)\,:\,i<n\}$, we may find in $Q$ a reflection of $q$ to $Q$ extending $q_0$:
The existence of such a $q^\ast$ can be expressed by a sentence with parameters in $Q$ and this sentence is true in $H(\t)$ as witnessed by $\tld q$.

We will prove (2) by induction on $\a$. Let $q^\dag\in Q\cap\mtcl P_\a$ extend $q^\ast$ and let $\ov q$ be obtained from $q^\dag$ as in (2). We prove by induction on $\b\leq\a$ that $\ov q$ is a $\mtcl P_\b$--condition. For $\b=0$ this follows immediately from Lemma \ref{iso4}. The present use of partial symmetric systems rather than fully symmetric systems in the sense of \cite{ASP} is needed precisely to make this point of the argument go through. With the stronger notion of symmetry from \cite{ASP} it might not be possible to amalgamate even the restriction to $0$ of $\ov q$ and $q$ into a $\mtcl P_0$--condition.
 The limit case follows from the induction hypothesis. For the successor case, let $\b$ be such that $\b+1<\a$ and suppose $\ov q\av_\b\in\mtcl P_\b$. It suffices to show that if $\b$ is in the support of $q$, then $\ov q\av_\b$ forces that $F_{q^\dag}(\b)$ is \begin{enumerate}
 \it $(N_i[\dot G_\b],\,\Upsilon(\b))$--generic for every $i<n$ such that $\g_i>\b$ and $N_i\in\mathfrak N_{\b+1}$, and
 \it $(\Psi(M)[\dot G_\b],\,\Upsilon(\b))$--generic whenever $M\in\mtcl N_{\b+1}$, $(M, \g)\in\D_{q^\dag}$ for some $\g\geq\b+1$, and there is some $i$ such that $M\in M_i$, $\g\leq\tld\g_i$ and $M_i\in\mathfrak N_{\b+1}$.
\end{enumerate}

We prove the first part only (the proof of the second part is essentially the same).
Note that for every $i$ as in (1), $M_i\in\mathfrak N_{\b+1}$ and $\tld\g_i>\b$. Hence, $\ov q\av_\b$ forces that $F_{q^\dag}(\b)$ is $(M_i[\dot  G_\b],\,\Upsilon(\b))$--generic since $\ov q\av_\b$ extends $q^\dag\av_\b$ and $q^\dag\av_\b$ forces this. Suppose towards a contradiction that there is a $\mtcl P_\b$--condition $t$ extending $\ov q\av_\b$, a $\mtcl P_\b$--name $\dot A\in N_i$ for a maximal antichain of $\Upsilon(\b)$, and some $y\in H(\k)\cap Q$ such that $t\Vdash_\b\check y\in\Upsilon(\b)$ and such that $t$ forces that $\check y$ extends $F_{q^\dag}(\b)$ and that no condition in $\dot A \cap N_i[\dot G_\b]$ is compatible with $y$.  We may assume that $\dot A$ consists of pairs $(r,  \check{x})$, with $r\in\mtcl P_\b$ and $\check{x}$ the canonical $\mtcl P_\b$--name of some $x\in H(\k)^{\V}$. Then $\Psi^{-1}(\dot A)\in M_i$ is a $\mtcl P_\b$--name for a maximal antichain of $\Upsilon(\b)$ by the choice of $\Psi$ since $\Psi^{-1}\restr N_i$ preserves the predicate $\mtcl P_\b$. Note that,  by the $\al_2$--c.c.\ of $\mtcl P_\b$ together with the $\al_2$--c.c.\ of $\Upsilon(\b)$ in $\V[G_\b]$, there is a surjection $\varphi:\o_1\into \dot A$ in $N_i$. Let now $t^\ast$ be a reflection of $t$ to $Q$ extending $q^\dag\av_\b$ (which exists by (1) for $\b$) and let $t'\leq_\b t^\ast$, $t'\in Q$, and $y'\in H(\k)\cap Q$ be such that $t'$ forces that $y'$ is a common extension in $\Upsilon(\b)$ of $y$ and of some $x \in\Psi^{-1}(\dot A)\cap M_i$. These objects exist since $q^\dag\av_\b$ is $(M_i, \mtcl P_\b)$--generic and forces $F_{q^\dag}(\b)$ to be $(M_i[\dot G_\b],\,\Upsilon(\b))$--generic and since $t^\ast$ extends $q^\dag\av_\b$. By extending $t'$ within $Q$ if necessary we may of course assume that $t'$ decides $x$, that $(r, \check{x})\in\Psi^{-1}(\dot A)\cap M_i$ for some  $r$ and that $t'$ extends $r$. There is some $\n<\d_{M_i}$ such that $\Psi^{-1}(\varphi)(\n)=(r, \check{x})$. But then $\varphi(\n)=(\Psi(r), \check{x})\in\dot A\cap N_i$. Now, by induction hypothesis there is a condition $\tld t$ in $\mtcl P_\b$ extending  $t$, $t'$ and  $\Psi(r)$. This is a contradiction since $\tld t$ forces that $x\in\dot A\cap N_i$ is compatible with $y$.
\end{proof}

\begin{lemma}\label{fa}
$\mtcl P_\k$ forces $\MA^{1.5}_{<\kappa}$.
\end{lemma}

\begin{proof}
Let $q$ be a $\mtcl P_\k$--condition, let $\chi < \k$, and let $\dot{\mtcl R}\sub H(\k)$ and $\dot A_i$ (for $i<\chi$) be $\mtcl P_\k$--names such that $q$ forces that $\dot{\mtcl R}$ is an $\aleph_{1.5}$--c.c.\ poset relative to $\V$ and $\dot G_\k$ and that each $\dot A_i$ is a maximal antichain of $\dot{\mtcl R}$. By Lemma \ref{bound}, Corollary \ref{cor5} and Lemma \ref{v-proper} we will be done if we show that there is some condition extending $q$ and forcing that there is a filter on $\dot{\mtcl R}$ intersecting all $\dot A_i$. Let $X$ be a subset of $\k$ coding the $\mtcl P_\k$--name $\dot{\mtcl R}$ via our fixed translating function $\p$.

Now, using the fact that $\vec X$ is a $\diamondsuit(\{\a<\k\,:\,\cf(\a)\geq\o_2 \})$--sequence, we may fix an elementary substructure $Q$ of some large enough $H(\t^\ast)$ containing $\Phi$, $q$, $\mtcl P_\k$, $\dot{\mtcl R}$, $(\dot A_i)_{i\in\chi}$, $\vec X$, $X$ and a well--order $W$ of $H(\t)$ for some strong limit $\t>\kappa$, closed under $\o$--sequences, and such that $\d= Q\cap\k$ is an ordinal such that $X_\d=X\cap\d$ (since $\m^{\al_0} < \k$ for all $\m < \k$, the set of $\d\in\k$ for which there is a $Q$ as above contains a $\l$--club for every regular cardinal $\l<\k$, $\l\geq\o_1$).  Let $\dot{\mtcl R}_0$ be the $\mtcl P_\d$--name coded by $X_\d$. By taking $\d$ from within a suitable club we may further assume that $q$ forces in $\mtcl P_\k$ for all $\x$, $\x'$ in $\d$ that if $\p_{X_\d}(\x)$ and $\p_{X_\d}(\x')$ are compatible conditions in $\dot{\mtcl R}$, then there is an ordinal below $\d$ coding a common extension in $\dot{\mtcl R}$ of $\p_{X_\d}(\x)$ and $\p_{X_\d}(\x')$ (since this is true for a club of $\k$ in the extension which, by the $\k$--c.c.\ of $\mtcl P_\k$, includes a club of $\k$ in $\V$). Using this, we may assume as well that $\d$ is taken so that $q$ forces in $\mtcl P_\k$, for every  $\mtcl P_\d$--name for a maximal antichain of $\dot{\mtcl R}_0$, that $\dot A$ is a maximal antichain of $\dot{\mtcl R}$. This is true since every $\dot A$ as above can be taken to be in $H(\k)$ and since $\Phi:\k\into H(\k)$ is a surjection.

The following claim follows from the closure of $Q$ under $\o$--sequences together with the above choice of $\d$.

\begin{claim} $q$ forces in $\mtcl P_\d$ that $\dot{\mtcl R}_0$ is an $\aleph_{1.5}$--c.c.\ poset relative to $\V$ and $\dot G_\d$.
\end{claim}

\begin{proof}
Let $q'\leq_\d q$ and let $\{N^\ast_0,\ldots, N^\ast_{n-1}\}$ be a set of countable elementary substructures of $\la H(\t), \in, W\ra$ containing everything relevant (this includes a name $\dot D$ for a club witnessing the $\al_{1.5}$--c.c.\ of $\dot{\mtcl R}$ relative to $\V$ and $\dot G_\k$) and such that $(N_i, \d)\in\D_{q'}$ for all $i$, where $N_i:=N_i^\ast\cap H(\k)$. Suppose towards a contradiction that there is some $y\in Q$ such that $q'\Vdash_\d\check{y}\in\dot{\mtcl R}_0$ and such that $y\in N_0$, where $\d_{N_0}=\min\{\d_{N_0},\ldots,\d_{N_{n-1}}\}$, and that $q'$ forces that there is no extension of $y$ which is $(N_i[\dot G_\d],\,\dot{\mtcl R}_0)$--generic for all $i<n$.

By the proof of Lemma \ref{refl} (1) together with the proof of Subclaim \ref{copias} we may find a reflection $q^\ast$ of $q'$ to $Q$ so that, in addition, if $\Psi$ is the isomorphism witnessing that $q^\ast$ is a reflection of $q'$ to $Q$ and $M_i=\Psi^{-1}`` N_i$ for all $i$, then there are $M_i^\ast\in Q$ (for $i<n$) such that each $M^\ast_i$ is a countable elementary substructure of $\la H(\t), \in, W\ra$ containing everything relevant and such that $M_i=M^\ast_i\cap H(\k)$ and $(M^\ast_i, \in, W)$ is isomorphic to $(N^\ast_i, \in, W)$ by an isomorphism fixing $\dot D$ and $\dot R$ whose restriction to $M_i$ is $\Psi\restr M_i$. We may also assume that $\Psi\restr M_i$ preserves $\mtcl P_\b$ for every $i$ and every $\b\in N_i\cap Q$. Clearly we may assume that $q^\ast$ also forces $y\in\dot{\mtcl R}$.  Let $q^\dag\in Q$ be an extension of $q^\ast$ and let $y'\in H(\k)\cap Q$ be such that $q^\dag$ forces that $y'$ is a condition in $\dot{\mtcl R}$ extending $y$ and $(M_i[\dot G_\k],\,\dot{\mtcl R})$--generic for all $i$.  By Lemma \ref{refl} (2), $q^\dag$ and $q'$ can be amalgamated into a $\mtcl P_\d$ condition $\ov q$. Now we argue that $\ov q$ forces that $y'$ is $(N_i[\dot G_\d],\,\dot{\mtcl R}_0)$--generic for all $i$.

For this, suppose towards a contradiction that there is a $\mtcl P_\d$--condition $\ov q'$ extending $\ov q$, some $i$, and some $\dot A\in N_i$ such that $\dot A$ is a $\mtcl P_\d$--name such that $\ov q'$ forces that $\dot A$ is a maximal antichain of $\dot{\mtcl R}_0$ and that there is no condition in $\dot A\cap N_i$ compatible with $y'$. Note that, by our choice of $\d$, $\dot A$ is a $\mtcl P_\k$--name such that $\ov q'$ forces in $\mtcl P_\k$ that $\dot A$ is a maximal antichain of $\dot{\mtcl R}$.  We may therefore assume that $\dot A$ is actually a $\mtcl P_\k$--name for a maximal antichain of $\dot{\mtcl R}$. Let $q^{\ast\ast}$ be a reflection of $\ov q'$ to $Q$.  The rest of the argument is again as in the proof of Lemma \ref{refl} (2): We find an extension $q^{\ddag}\in Q$ of $q^{\ast\ast}$ together with some $y''$ and some $x\in M_i$ such that $q^{\ddag}$ forces that $\check x\in\Psi^{-1}(\dot A)$ and that $y''$ is a common extension in $\dot{\mtcl R}$ of  $x$ and $y'$. We then find by (the proof of) Lemma \ref{refl} (2) an extension of $q^{\ddag}$ and $\ov q'$ forcing $\check x\in \dot A\cap N_i$, which is a contradiction. Exactly as in the proof of  Lemma \ref{refl} (2), the argument uses the fact that $\dot{\mtcl R}$ has the $\al_2$--c.c.\ and so, since $\mtcl P_\d$ has also the $\al_2$--c.c., there is an enumeration of $\dot A$ in $N_i$ in length $\o_1$.
\end{proof}

It follows from the above claim that $q$ forces $\Upsilon(\d)=\dot{\mtcl R}_0$. Finally, we may extend $q$ to a condition $q'$ such that $\d\in \supp(q')$. Then, by Lemma \ref{genh}, $q'$ forces that there is a filter $H$ on $\dot{\mtcl R}_0$ meeting all $\dot A_i$, and of course $H$ generates a filter on $\dot{\mtcl R}$.
\end{proof}

\begin{lemma}\label{kappa}
$\mtcl P_{\k}$ forces $2^{\al_0}=\k$.
\end{lemma}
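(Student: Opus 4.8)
The plan is to show the two inequalities $2^{\al_0} \leq \k$ and $2^{\al_0} \geq \k$ separately in the $\mtcl P_\k$--extension. The first is essentially a counting-of-names argument, while the second comes from the fact that cofinally many stages of the iteration add a new real.

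For $2^{\al_0} \leq \k$: every nice $\mtcl P_\k$--name for a subset of $\o$ is (by Lemma \ref{cc}, the $\al_2$--c.c. of $\mtcl P_\k$) an $\o$--indexed sequence of antichains of $\mtcl P_\k$, each of size at most $\al_1$. Since $\av\mtcl P_\k\av = \k$ (each $\mtcl P_\b$ for $\b<\k$ has size at most $\k$ by the definitions, $\mtcl P_\k = \bigcup_{\b<\k}\mtcl P_\b$ by Lemma \ref{suborder}, and $\cf(\k)\geq\o_3$), and since $\m^{\al_1}<\k$ for all $\m<\k$ together with $\k$ regular gives $\k^{\al_1} = \k$, there are at most $\k$ such names. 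Hence $\mtcl P_\k$ forces $2^{\al_0}\leq\k$. (One could alternatively invoke Lemma \ref{bound} with $\k^{<\k} = \k$, together with the $\al_2$--c.c., to bound the number of nice names.)

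For $2^{\al_0} \geq \k$: it suffices to show that $\mtcl P_\k$ forces $\k\leq 2^{\al_0}$, i.e. that there is no surjection from a smaller cardinal; equivalently, it is enough to produce, for cofinally many $\a<\k$, a real in $V[\dot G_{\a+1}]$ not in $V[\dot G_\a]$, so that the reals of the extension have size at least $\k$. For this I would exhibit a specific $H(\k^+)^V$--regular poset relative to $\dot G_\a$ with the $\al_2$--c.c. that adds a real -- for instance the poset from Fact \ref{unifnoclub-guessing} applied to a suitable sequence, or more simply a Cohen-like poset of the form $\{0\}\times \mtbb C$ where $\mtbb C$ is Cohen forcing (which is c.c.c., hence regular via $\p(p) = (0,p)$, hence in $\Upsilon$, hence $H(\k^+)^V$--regular relative to any generic by Lemma \ref{v-proper}). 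The diamond sequence $\vec X$ guarantees that a name for such a poset is caught as $\Phi(\a)$ for stationarily (in particular cofinally) many $\a<\k$ with $\cf(\a)\geq\o_2$; by Lemma \ref{genh}, $\mtcl P_\k$ forces that a $V[\dot G_\a]$--generic filter for $\Phi(\a)$ is added, and the corresponding Cohen real is new over $V[\dot G_\a]$. Since $\mtcl P_\k$ preserves cardinals (it is proper, so $\o_1$ is preserved, and has the $\al_2$--c.c., so all cardinals $\geq\al_2$ are preserved), these $\k$--many distinct reals witness $2^{\al_0}\geq\k$ in the extension.

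The main obstacle is the lower bound: one must verify that the diamond guessing genuinely delivers cofinally many stages at which the generically-added real is new, and in particular that the bookkeeping does not accidentally trivialize -- i.e. that on a cofinal (indeed $\l$--club for each regular $\l\in[\o_2,\k)$) set of $\a$ the value $X_\a$ codes a $\mtcl P_\a$--name $\dot X$ which is (forced to be) an $H(\k^+)^V$--regular $\al_2$--c.c. poset adding a real, so that $\Phi(\a) = \dot X$ and not the trivial forcing. This is handled exactly as in the proof of Lemma \ref{fa}: fixing a candidate name $\dot X$ for Cohen forcing in the form $\{0\}\times\dot{\mtbb C}$, elementarity and the closure properties of the diamond-guessed models $Q$ (using $\m^{\al_1}<\k$) show that $\{\d<\k : X_\d \text{ codes the restricted name and } \Phi(\d) = \dot{\mtbb C}\}$ is cofinal in $\k$; everything else is routine cardinal-arithmetic and absoluteness bookkeeping already available from Lemmas \ref{suborder}, \ref{cc}, \ref{genh}, \ref{v-proper}, and \ref{bound}.
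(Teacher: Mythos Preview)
Your proof is correct, and the upper bound matches the paper's (both invoke the $\al_2$--c.c. counting of nice names / Lemma \ref{bound}). For the lower bound, however, the paper takes a shorter route: it simply cites Lemma \ref{fa}, which says that $\mtcl P_\k$ forces $\textsc{MA}(\Upsilon)_{<\k}$. Since $\textsc{MA}(\Upsilon)_\l$ implies $\textsc{MA}_\l$ (every c.c.c.\ poset admits a regular representation), and $\textsc{MA}_\l$ implies $2^{\al_0}>\l$ for every $\l\geq\o_1$, the inequality $2^{\al_0}\geq\k$ follows in one line. Your argument instead re-runs, for the specific case of Cohen forcing, the diamond-catching already carried out in the proof of Lemma \ref{fa}. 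That works, and has the minor advantage of not needing the full forcing axiom; but given that Lemma \ref{fa} precedes this lemma, the paper's appeal to it avoids duplicating that effort.
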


\begin{proof}
$\V^{\mtcl P_\k}\models 2^{\al_0}\geq\k$ follows for example from the fact that $\mtcl P_\k$ forces $\MA^{1.5}_{<\kappa}$. $\V^{\mtcl P_\k}\models 2^{\al_0}\leq\k$ follows from Lemma \ref{bound}.
\end{proof}

Lemma \ref{kappa} finishes the proof of Theorem \ref{mainthm}.

\end{document}